\numberwithin{equation}{section}
\numberwithin{figure}{section}
\theoremstyle{plain}
\newtheorem{thm}{\protect\theoremname}[section]
\theoremstyle{plain}
\theoremstyle{definition}
\theoremstyle{plain}
\theoremstyle{plain}
\newtheorem{rem}[thm]{\protect\remarkname}
\theoremstyle{plain}
\providecommand{\definitionname}{Definition}
\providecommand{\lemmaname}{Lemma}
\providecommand{\theoremname}{Theorem}
\providecommand{\corollaryname}{Corollary}
\providecommand{\remarkname}{Remark}
\providecommand{\propositionname}{Proposition}
\DeclareMathOperator{\loc}{loc}
\DeclareMathOperator{\ess}{ess}
\DeclareMathOperator{\cp}{cap}
\begin{document}

\title{Composition Operators on Sobolev Spaces and Neumann Eigenvalues}

\author{V.~Gol'dshtein, A.~Ukhlov}
\begin{abstract}
In this paper we discuss applications of the geometric theory of composition operators on Sobolev spaces to the spectral theory of non-linear elliptic operators. The lower  estimates of the first non-trivial Neumann eigenvalues of the $p$-Laplace operator in cusp domains $\Omega\subset\mathbb R^n$, $n\geq 2$, are given.

\end{abstract}
\maketitle
\footnotetext{\textbf{Key words and phrases:} Sobolev spaces, Neumann eigenvalues, Quasiconformal mappings} 
\footnotetext{\textbf{2000
Mathematics Subject Classification:} 35P15, 46E35, 30C65.}

\section{Introduction }

In the article we discuss a general method of applications of the geometric theory of composition operators on Sobolev spaces to spectral estimates of the non-linear Neumann-Laplace operators, proposed in our recent works.
Suppose that in a bounded domain  $\Omega\subset\mathbb R^n$  the following Sobolev-Poincar\'e inequality 
$$
\inf\limits_{c\in\mathbb R}\|f-c \mid L_p(\Omega)\|\leq B_{p,p}(\Omega)\|\nabla f \mid L_p(\Omega)\|,\,\,\, f\in W^{1}_{p}(\Omega).
$$
holds with the Poincar\'e constant $B_{p,p}(\Omega)$.
Then the first nontrivial Neumann eigenvalue $\mu_{p}(\Omega)$ of the $p$-Laplace operator 
\begin{equation}
\label{pLaplace}
-\Delta_p u=-\operatorname{div}\left(|\nabla u|^{p-2}\nabla u\right),\,\,p>1,
\end{equation}
can be characterized as $\mu_{p}(\Omega)=B^{-p}_{p,p}(\Omega)$ (see, for example, \cite{BCT15}). 

Exact calculations of Neumann eigenvalues are known in very limited number of domains and so estimates of $\mu_{p}(\Omega)$ represent an important part of the modern spectral theory (see, for example, \cite{A98,BCT15,FNT,ENT,EP15,LM98}).  We suggest spectral estimates of the first nontrivial Neumann eigenvalue $\mu_p(\Omega)$ of the $p$-Laplace operator in terms of the (weak) quasiconformal geometry of a domain $\Omega$.
 
The classical upper estimate for the first nontrivial Neumann eigenvalue of the Laplace operator   
$$
\mu_2(\Omega)\leq \mu_2(\Omega^{\ast})=\frac{p^2_{n/2}}{R^2_{\ast}}
$$
was proved by Szeg\"o \cite{S54} for simply connected planar domains via a conformal mappings technique ("the method of conformal normalization") and by Weinberger  \cite{W56} for  domains in $\mathbb{R}^n$. In this inequality $p_{n/2}$ denotes the first positive zero of the function $(t^{1-n/2}J_{n/2}(t))'$, and $\Omega^{\ast}$ is an $n$-ball of the same $n$-volume as $\Omega$ with $R_{\ast}$ as its radius. In particular, if $n=2$, we have $p_1=j_{1,1}'\approx1.84118$ where $j_{1,1}'$ denotes the first positive zero of the derivative of the Bessel function $J_1$. 

Is convex domains $\Omega\subset\mathbb R^n$, $n\geq 2$,  the classical lower estimates of the linear Neumann-Laplace operator (p=2) \cite{PW} states that   
$$
\mu_2(\Omega)\geq \frac{\pi^2}{d(\Omega)^2},
$$
where $d(\Omega)$ is a diameter of a convex domain $\Omega$. Similar estimates for $p\ne 2$ were obtained much later \cite{ENT}: 
$$
\mu_p(\Omega) \geq \left(\frac{\pi_p}{d(\Omega)}\right)^p
$$
where
$$
\pi_p = 2 \int\limits_0^{(p-1)^{\frac{1}{p}}} \frac{dt}{(1-t^p/(p-1))^{\frac{1}{p}}} =
2 \pi \frac{(p-1)^{\frac{1}{p}}}{p \sin(\pi/p)}.
$$

Unfortunately in non-convex domains $\mu_p(\Omega)$ can not be estimated in the terms of Euclidean diameters. It can be seen by considering a domain consisting of two identical squares connected by a thin corridor \cite{BCDL16}. 
In our previous works  \cite{GU16,GU2016} we returned to a conformal mappings techniques (that was used in \cite{S54}) in a framework of composition operators on Sobolev spaces. It permit us to obtain lower estimates of $\mu_2(\Omega)$  in the terms of the hyperbolic (conformal) radius of $\Omega$ for a large class of general (non necessary convex) domains $\Omega\subset\mathbb R^2$. 

In the case of space domains conformal mappings does not allow to obtain spectral estimates of the Neumann-Laplace operator and we use a generalization of conformal mappings such as weak $p$-quasiconformal mappings \cite{GU17}. The aim of the present work is to obtain lower estimates of the Neumann eigenvalues of the $p$-Laplace operator in the terms of general weak $(p,q)$-quasiconformal geometry of domains. We use one more time the geometric theory of composition operators on Sobolev spaces \cite{GGR95,U93} with applications to the (weighted) Poincar\'e-Sobolev inequalities \cite{GGu,GU}.

The paper is organized as follows:
The capacitory description of Sobolev spaces and a short survey of the geometric theory of composition operators on Sobolev spaces are presented in section 2. In section 3 we give a general method of applications of composition operators to the Sobolev-Poincar\'e inequalities and apply them to lower estimates of the first nontrivial Neumann eigenvalue for the $p$-Laplace operator.

\section{Composition operators on Sobolev spaces}

\subsection{Sobolev spaces}

Let $E$ be a measurable subset of $\mathbb R^n$, $n\geq 2$. The Lebesgue space $L_p(E)$, $1\leq p\leq\infty$, is defined as a Banach space of $p$-summable functions $f:E\to \mathbb R$ equipped with the following norm:
$$
\|f\mid L_p(E)\|=
\biggr(\int\limits_E|f(x)|^p\,dx\biggr)^{\frac{1}{p}},\,\,\,1\leq p<\infty,
$$
and
$$
\|f\mid L_{\infty}(E)\|=
\ess\sup\limits_{x\in E}|f(x)|,\quad p=\infty.
$$

If $\Omega$ is an open subset of $\mathbb R^n$, the Sobolev space $W^1_p(\Omega)$, $1\leq p\leq\infty$, is defined 
as a Banach space of locally integrable weakly differentiable functions
$f:\Omega\to\mathbb{R}$ equipped with the following norm: 
\[
\|f\mid W^1_p(\Omega)\|=\| f\mid L_p(\Omega)\|+\|\nabla f\mid L_p(\Omega)\|,
\]
where $\nabla f$ is the weak gradient of the function $f$, i.~e. $ \nabla f = (\frac{\partial f}{\partial x_1},...,\frac{\partial f}{\partial x_n})$,
$$
\int\limits_{\Omega} f \frac{\partial g}{\partial x_i}~dx=-\int\limits_{\Omega} \frac{\partial f}{\partial x_i} g~dx, \quad \forall g\in C_0^{\infty}(\Omega),\quad i=1,...,n.
$$ 
As usual, $C_0^{\infty}(\Omega)$ is the space of infinitely smooth functions with a compact support.

The homogeneous seminormed Sobolev space $L^1_p(\Omega)$, $1\leq p\leq\infty$, is defined as a space
of locally integrable weakly differentiable functions $f:\Omega\to\mathbb{R}$ equipped
with the following seminorm: 
\[
\|f\mid L^1_p(\Omega)\|=\|\nabla f\mid L_p(\Omega)\|.
\]

Recall the notion of the $p$-capacity of a set $E\subset \Omega$. Let $\Omega$ be a domain in $\mathbb R^n$ and a compact $F\subset\Omega$. The $p$-capacity of the compact $F$ is defined by
$$
\cp_p(F;\Omega) =\inf\{\|f|L^1_p(\Omega)|^p,\,\,f\geq 1\,\, \text{on}\,\, F, \,\,f\in C_0(\Omega)\cap L^1_p(\Omega),
$$
where $ C_0(\Omega)$ is the space of continuous functions with a compact support.

By the similar way we can define $p$-capacity of open sets.

For arbitrary set $E\subset\Omega$ we define an inner $p$-capacity as 
$$
\underline{\cp}_p(E;\Omega)=\sup\{\cp_p(e;\Omega),\,\,e\subset E\subset\Omega,\,\, e\,\,\text{is a compact}\},
$$
and an outer $p$-capacity as 
$$
\overline{\cp}_p(E;\Omega)=\inf\{\cp_p(U;\Omega),\,\,E\subset U\subset\Omega,\,\, U\,\,\text{is an open set}\}.
$$
A set $E\subset\Omega$ is called $p$-capacity measurable, if $\underline{\cp}_p(E;\Omega)=\overline{\cp}_p(E;\Omega)$. The value
$$
\cp_p(E;\Omega)=\underline{\cp}_p(E;\Omega)=\overline{\cp}_p(E;\Omega)
$$
is called the $p$-capacity of the set $E\subset\Omega$.

The notion of $p$-capacity permits us to refine the notion of Sobolev functions. Let a function $f\in L^1_p(\Omega)$. Then refined function 
$$
\tilde{f}(x)=\lim\limits_{r\to 0}\frac{1}{|B(x,r)|}\int\limits_{B(x,r)} f(y)~dy
$$
is defined quasieverywhere i.~e. up to a set of $p$-capacity zero and it is absolutely continuous on almost all lines \cite{M}. This refined function $\tilde{f}\in L^1_p(\Omega)$ is called a unique quasicontinuous representation ({\it a canonical representation}) of function $f\in L^1_p(\Omega)$. Recall that a function $\tilde{f}$ is termed quasicontinuous if for any $\varepsilon >0$ there is an open  set $U_{\varepsilon}$ such that the $p$-capacity of $U_{\varepsilon}$ is less than $\varepsilon$ and on the set $\Omega\setminus U_{\varepsilon}$ the function  $\tilde{f}$ is continuous (see, for example \cite{HKM,M}). 
In what follows we will use the quasicontinuous (refined) functions only.

Note that the first weak derivatives of
the function $f$ coincide almost everywhere with the usual
partial derivatives (see, e.g., \cite{M} ).

\subsection{Composition operators}

Let $\Omega$ and $\widetilde{\Omega}$ be domains in $\mathbb R^n$, $n\geq 2$. We say that
a homeomorphism $\varphi:\Omega\to\widetilde{\Omega}$ induces a bounded composition
operator 
\[
\varphi^{\ast}:L^1_p(\widetilde{\Omega})\to L^1_q(\Omega),\,\,\,1\leq q\leq p\leq\infty,
\]
by the composition rule $\varphi^{\ast}(f)=f\circ\varphi$, if for
any function $f\in L^1_p(\widetilde{\Omega})$, the composition $\varphi^{\ast}(f)\in L^1_q(\Omega)$
is defined quasi-everywhere in $\Omega$ and there exists a constant $K_{p,q}(\Omega)<\infty$ such that 
\[
\|\varphi^{\ast}(f)\mid L^1_q(\Omega)\|\leq K_{p,q}(\Omega)\|f\mid L^1_p(\widetilde{\Omega})\|.
\]

Composition operators on Sobolev spaces arise in \cite{M69} as operators generated by the class of sub-areal mappings. In \cite{VG75} in the frameworks of Reshetnyak's problem (1968) was proved that a homeomorphism $\varphi:\Omega\to\widetilde{\Omega}$ generates an isomorphism of Sobolev spaces $L^1_n(\Omega)$ and $L^1_n(\widetilde{\Omega})$ if and only if $\varphi$ is a quasiconformal mapping. These works motivated the study of geometric properties of mappings which generate isomorphisms (bounded operators) of Sobolev type spaces. In \cite{VG76}
was proved that a homeomorphism $\varphi:\Omega\to\widetilde{\Omega}$ generates an isomorphism of Sobolev spaces $L^1_p(\Omega)$ and $L^1_p(\widetilde{\Omega})$, $p>n$, if and only if $\varphi$ is a bi-Lipschitz mapping. This result was extended to the cases $n-1<p<n$ in  \cite{GR84} and the case $1\leq p<n$ in \cite{Mar90}.

\begin{rem} 
Recall that a homeomorphism $\varphi$ is called a bi-Lipschitz homeomorphism if $\varphi$ and $\varphi^{-1}$ are locally Lipschitz mappings with uniformly bounded local Lipschitz constants.
\end{rem}
 Mappings generates isomorphisms of Besov spaces was considered in \cite{V89}, Nikolskii-Besov spaces and Lizorkin-Triebel spaces were considered in \cite{V90}. In \cite{MS86} the theory of multipliers was applied to the change of variable problem in Sobolev spaces.

The composition operators are not necessary isomorphisms even if they induced by diffeomorphisms or homeomorphisms of Euclidean domains. It means that the composition problem can be reformulated by a more flexible way: describe classes of homeomorphisms that induce bounded composition operators on Sobolev spaces. In the case of Sobolev spaces with the first weak derivatives it can be formulated as a characterization of homeomorphism $\varphi:\Omega\to\widetilde{\Omega}$ that generate composition operators 
\begin{equation}
\label{eq:comppq}
\varphi^{\ast}: L^1_p(\widetilde{\Omega})\to L^1_q(\Omega),\,\,\,1\leq q\leq p\leq\infty,
\end{equation}
generated by the standard composition rule $\varphi^{\ast}(f)=f\circ\varphi$.

Analytical characteristics of mappings generate bounded compositions operators on Sobolev spaces are given in terms of weak derivatives of the mappings.
Let a mapping $\varphi:\Omega\to\mathbb{R}^{n}$ be weakly differentiable in $\Omega$. Then the formal Jacobi
matrix $D\varphi(x)$ and its determinant (Jacobian) $J(x,\varphi)$
are well defined at almost all points $x\in\Omega$. The norm $|D\varphi(x)|$
of the matrix $D\varphi(x)$ is the norm of the corresponding linear
operator. We will use the same notation for this matrix and the corresponding
linear operator.

Recall that a mapping $\varphi:\Omega\to\mathbb{R}^{n}$ possesses the Luzin $N$-property if an image of any set of measure zero has measure zero. Lipschitz mapping possess the Luzin $N$-property.

In the case $p=q\ne n$ an analytic description was obtained in \cite{V88} using a notion of mappings of finite distortion introduced in \cite{VGR}: a weakly differentiable mapping is called a mapping of finite distortion if $|D\varphi(x)|=0$ a.~e. on the set $Z=\{x\in\Omega: J(x,\varphi)=0\}$. This property becomes useful especially for the case $p\neq q$. 

In \cite{GGR95} it was obtained the geometric description of composition operators for $n-1<p=q<\infty$ without using the finite distortion property. In the case $1<p<n-1$ there are proofs \cite{GGR95} with additional apriori properties of homeomorphisms. 

Let us reformulate the main result of \cite{GGR95}. 
\begin{thm}
\label{CompGeom} \cite{GGR95} A homeomorphism $\varphi:\Omega\to\widetilde{\Omega}$
between two domains $\Omega$ and $\widetilde{\Omega}$ induces a bounded composition
operator 
\[
\varphi^{\ast}:L^1_p(\widetilde{\Omega})\to L^1_p(\Omega),\,\,\,n-1<p<\infty,
\]
if and only if 
\begin{equation}
\label{eq:geom}
M^{(\lambda)}_{p}(\varphi;\Omega)=\sup\limits_{x\in\Omega}\limsup\limits_{r\to 0}\frac{L^p_{\varphi}(x,r)r^{n-p}}{|\varphi(B(x,\lambda r))|}<\infty,
\end{equation} 
for some $\lambda>1$ if $n-1<p<n$ and $\lambda=1$ if $n\leq p<\infty$, where $L_{\varphi}(x,r)=\max\limits_{|x-y|=r}|\varphi(x)-\varphi(y)|$.

\end{thm}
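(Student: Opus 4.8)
\emph{Plan of proof.} The quantity $M^{(\lambda)}_{p}(\varphi;\Omega)$ should be read as a purely metric substitute for the (almost everywhere defined) operator distortion $|D\varphi(x)|^{p}/|J(x,\varphi)|$: at a point $x$ where $\varphi$ is differentiable with $J(x,\varphi)\neq 0$ one has $L_{\varphi}(x,r)=|D\varphi(x)|\,r+o(r)$ and $|\varphi(B(x,\lambda r))|=\omega_{n}\lambda^{n}|J(x,\varphi)|\,r^{n}+o(r^{n})$, where $\omega_{n}$ is the volume of the unit ball, so the $\limsup$ in \eqref{eq:geom} equals $|D\varphi(x)|^{p}/\bigl(\omega_{n}\lambda^{n}|J(x,\varphi)|\bigr)$. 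The plan is to prove both implications through the equivalent \emph{condenser reformulation} of boundedness, obtained from the capacitory description of $L^1_p$: the operator $\varphi^{\ast}\colon L^{1}_{p}(\widetilde\Omega)\to L^{1}_{p}(\Omega)$ is bounded with norm $\le K$ if and only if $\cp_{p}\bigl(\varphi^{-1}(F_{0});\varphi^{-1}(F_{1})\bigr)\le K^{p}\,\cp_{p}(F_{0};F_{1})$ for every condenser $(F_{0},F_{1})$ in $\widetilde\Omega$. The forward direction here is immediate: an admissible (say Lipschitz) function $u$ for $(F_{0},F_{1})$ pulls back to $u\circ\varphi$, which is admissible for $(\varphi^{-1}(F_{0}),\varphi^{-1}(F_{1}))$ and satisfies $\|u\circ\varphi\mid L^1_p\|\le K\|u\mid L^1_p\|$; the converse is the reconstruction of $\|f\mid L^{1}_{p}\|$ of a quasicontinuous $f$ from the $p$-capacities of its level-set condensers.

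\emph{Necessity.} Assume $\varphi^{\ast}$ bounded, fix $x_{0}\in\Omega$ and small $r>0$ with $B(x_{0},\lambda r)\Subset\Omega$, put $y_{0}=\varphi(x_{0})$, $\rho_{1}=L_{\varphi}(x_{0},r)$ and $\rho_{2}=\tfrac12\dist\bigl(y_{0},\varphi(\partial B(x_{0},\lambda r))\bigr)$. Then $\varphi(\overline{B(x_{0},r)})\subset\overline{B(y_{0},\rho_{1})}$ and $\varphi^{-1}(B(y_{0},\rho_{2}))\subset B(x_{0},\lambda r)$, so by monotonicity of capacity and the condenser inequality
\[
A(n,p,\lambda)\,r^{n-p}=\cp_{p}\bigl(\overline{B(x_{0},r)};B(x_{0},\lambda r)\bigr)\le K^{p}\,\cp_{p}\bigl(\overline{B(y_{0},\rho_{1})};B(y_{0},\rho_{2})\bigr).
\]
Provided $\rho_{1}\le\rho_{2}$, the explicit formula for the $p$-capacity of spherical condensers gives $\cp_{p}(\overline{B(y_{0},\rho_{1})};B(y_{0},\rho_{2}))\le c(n,p)\,\rho_{1}^{n-p}$, and since $|\varphi(B(x_{0},\lambda r))|\ge\omega_{n}\rho_{2}^{n}$ one deduces, after multiplying by $\rho_{1}^{p}/(\omega_{n}\rho_{2}^{n})$ and using $\rho_1\le\rho_2$, that $L_{\varphi}(x_{0},r)^{p}r^{n-p}\le C(n,p,\lambda)K^{p}\,|\varphi(B(x_{0},\lambda r))|$; letting $r\to 0$ and taking the supremum over $x_{0}$ gives $M^{(\lambda)}_{p}(\varphi;\Omega)<\infty$. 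The point requiring care is exactly the geometric non-degeneracy $\rho_{1}\lesssim\rho_{2}$ uniformly in $x_{0}$ and $r$; this too is forced by boundedness of $\varphi^{\ast}$ (via further capacitary tests), but only with a sufficiently large dilation $\lambda>1$ when $n-1<p<n$, whereas $\lambda=1$ suffices for $p\ge n$.

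\emph{Sufficiency.} Suppose $M:=M^{(\lambda)}_{p}(\varphi;\Omega)<\infty$. The first and main step is a regularity bootstrap. From \eqref{eq:geom} one has $L_{\varphi}(x,r)^{p}\le M\,|\varphi(B(x,\lambda r))|\,r^{p-n}$, and since $A\mapsto|\varphi(A)|$ is a locally finite Borel measure on $\Omega$ (as $\varphi$ is a homeomorphism), Lebesgue differentiation bounds $\limsup_{r\to0}L_{\varphi}(x,r)/r$ by a locally integrable function; for $p>n-1$ this, with the homeomorphism property, yields $\varphi\in W^{1}_{1,\loc}(\Omega)$, differentiability almost everywhere, the Luzin $N$-property, the finite distortion property ($|D\varphi(x)|=0$ a.e.\ on $\{J(\cdot,\varphi)=0\}$), and the validity of the change of variables formula. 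Granting this, for $f\in C^{\infty}(\widetilde\Omega)\cap L^{1}_{p}(\widetilde\Omega)$ the chain rule gives $|\nabla(f\circ\varphi)(x)|\le|D\varphi(x)|\,|(\nabla f)(\varphi(x))|$ a.e., and changing variables $y=\varphi(x)$ off $\{J=0\}$ (which contributes nothing) yields
\[
\int_{\Omega}|\nabla(f\circ\varphi)(x)|^{p}\,dx\le\int_{\widetilde\Omega}\frac{|D\varphi(\varphi^{-1}(y))|^{p}}{|J(\varphi^{-1}(y),\varphi)|}\,|\nabla f(y)|^{p}\,dy .
\]
By the identification in the first paragraph, $|D\varphi(x)|^{p}/|J(x,\varphi)|\le\omega_{n}\lambda^{n}M$ at a.e.\ point of differentiability with non-vanishing Jacobian, so $\|f\circ\varphi\mid L^{1}_{p}(\Omega)\|\le(\omega_{n}\lambda^{n}M)^{1/p}\|f\mid L^{1}_{p}(\widetilde\Omega)\|$; approximating quasicontinuous representatives by smooth functions extends this to all of $L^{1}_{p}(\widetilde\Omega)$.

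\emph{Main obstacle.} The genuinely difficult part is the regularity bootstrap in the sufficiency direction — deducing weak differentiability, the $N$-property and the change of variables formula from the metric bound \eqref{eq:geom} alone — together with the reason the averaged condition (with $\lambda>1$) rather than the pointwise one is needed when $n-1<p<n$; the parallel subtlety on the necessity side is establishing the geometric non-degeneracy uniformly in $x$. Both are where the hypotheses $p>n-1$ and the dichotomy $\lambda>1$ vs.\ $\lambda=1$ are actually used.
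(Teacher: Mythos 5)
This theorem is quoted from \cite{GGR95}; the paper itself contains no proof of it, so there is no in-text argument to compare against and I am judging your proposal on its own terms. Your overall architecture is the right one and does match the strategy of \cite{GGR95}: reduce boundedness of $\varphi^{\ast}$ to the condenser inequality (Theorem~\ref{thm:CapacityDescPP}), test with spherical condensers to get necessity, and for sufficiency pass from the metric bound to an a.e.\ pointwise distortion bound $|D\varphi(x)|^{p}\le C|J(x,\varphi)|$ and conclude by the chain rule and the change-of-variables (area) inequality. The problem is that the two steps you explicitly defer are not side issues: they are essentially the entire content of the theorem, so what you have is a correct plan rather than a proof.

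Concretely. (i) \emph{Necessity.} The displayed inequality $A(n,p,\lambda)r^{n-p}=\cp_{p}\bigl(\overline{B(x_{0},r)};B(x_{0},\lambda r)\bigr)$ is meaningless in the case $p\ge n$, $\lambda=1$, where the ring $B(x_{0},\lambda r)\setminus\overline{B(x_{0},r)}$ is empty; one must instead test with condensers $\bigl(\overline{B(x_{0},\tau r)},\Omega\setminus B(x_{0},r)\bigr)$, $\tau<1$, (or, for $p>n$, exploit the positivity of the $p$-capacity of points) and pass to the limit $\tau\to1$, and this is precisely where the dichotomy $\lambda=1$ versus $\lambda>1$ is decided. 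The uniform comparability $L_{\varphi}(x_{0},r)\le C\,\dist\bigl(\varphi(x_{0}),\varphi(\partial B(x_{0},\lambda r))\bigr)$, which your arithmetic needs, is a quasisymmetry-type distortion estimate that has to be extracted from the capacity inequality by a separate argument using the connectedness of spheres and the hypothesis $p>n-1$; asserting that it ``is forced by boundedness via further capacitary tests'' leaves the hardest part of this direction unproved. (ii) \emph{Sufficiency.} From $\limsup_{r\to0}L_{\varphi}(x,r)/r<\infty$ a.e.\ one gets a.e.\ differentiability by Stepanov's theorem, but \emph{not} $\varphi\in W^{1}_{1,\loc}$ (the ACL property), \emph{not} the Luzin $N$-property (which fails for general a.e.\ differentiable homeomorphisms), and \emph{not} the finite-distortion property; ``Lebesgue differentiation'' does not supply any of these. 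In \cite{GGR95} establishing the ACL property and the a.e.\ chain rule from the metric condition is the bulk of the work, and is again where $p>n-1$ enters through estimates of oscillation on almost all lines and spheres. (A small point in your favour: the change-of-variables step only needs the area inequality $\int_{A}|J(x,\varphi)|\,dx\le|\varphi(A)|$, which holds without Luzin $N$, so that part of your bootstrap can be weakened; but the set $\{J=0\}$ can only be discarded once finite distortion is known.) Finally, replacing $\sup_{x}$ by the a.e.\ identification $|D\varphi(x)|^{p}/(\omega_{n}\lambda^{n}|J(x,\varphi)|)$ is legitimate only at points of differentiability with $J\ne0$, so the clean constant $(\omega_{n}\lambda^{n}M)^{1/p}$ also rests on the unproved regularity statements.
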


On the base of this result the notion of $p$-quasiconformal mappings was introduced for $n-1<p<\infty$. In the case $1<p<n-1$ this result is correct under an additional apriori property of weak differentiability of $\varphi$ \cite{GGR95}. 

Recall that a weakly differentiable homeomorphism $\varphi:\Omega\to\widetilde{\Omega}$ is called a weak $p$-quasiconformal mapping if there exists a constant $K_p<\infty$ such that
$$
|D\varphi(x)|^p\leq K^p_p |J(x,\varphi|\,\,\,\text{for almost all}\,\,\, x\in\Omega.
$$

The class of weak $p$-quasiconformal mappings are natural generalization of quasiconformal mappings and for $p=n$ these classes coincide. 

Note that planar conformal mapping preserve the Dirichlet energy integral 
$$
\int\limits_{\Omega}|\nabla f(x)|^2~dx
$$ 
because $|D\varphi(x)|^2= |J(x,\varphi|$ in $\Omega$.

In the space $\mathbb R^3$ conformal mappings don't connected to the Dirichlet energy integral, but the class of weak $2$-conformal mappings  (co-conformal mappings \cite{GU17}) generates bounded composition operators on the Dirichlet spaces.  It is a reason for detailed study of weak $2$-quasiconformal mappings that looks as an appropriate class for possible applications to  elliptic equations as was mentioned in \cite{GU17}. 

The case $p\ne q$ is more complicated and in this case the composition operators theory is based on the countable-additive property  of the norm of composition operators defined on open subsets of $\Omega$ \cite{U93} (see also \cite{VU02}).
The main result of \cite{U93} gives an analytic description of composition operators on Sobolev spaces (see, also \cite{VU02}) and asserts that

\begin{thm}
\label{CompTh} \cite{U93} A homeomorphism $\varphi:\Omega\to\widetilde{\Omega}$
between two domains $\Omega$ and $\widetilde{\Omega}$ induces a bounded composition
operator 
\[
\varphi^{\ast}:L^1_p(\widetilde{\Omega})\to L^1_q(\Omega),\,\,\,1\leq q<p<\infty,
\]
 if and only if $\varphi\in W^1_{1,\loc}(\Omega)$, has finite distortion,
and 
\[
K_{p,q}(\varphi;\Omega)=\biggl(\int\limits _{\Omega}\biggl(\frac{|D\varphi(x)|^{p}}{|J(x,\varphi)|}\biggr)^{\frac{q}{p-q}}~dx\biggr)^{\frac{p-q}{pq}}<\infty.
\]
\end{thm}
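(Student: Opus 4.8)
The plan is to prove the two implications separately. For the sufficiency, assume $\varphi\in W^1_{1,\loc}(\Omega)$ has finite distortion and $K_{p,q}(\varphi;\Omega)<\infty$. I would first check the composition inequality on $f\in L^1_p(\widetilde\Omega)\cap C^{\infty}(\widetilde\Omega)$: for such $f$ the chain rule for compositions with Sobolev mappings gives that $f\circ\varphi$ is $\ACL$ in $\Omega$ with $|\nabla(f\circ\varphi)(x)|\le|D\varphi(x)|\,|(\nabla f)(\varphi(x))|$ for almost all $x\in\Omega$. Since $\varphi$ has finite distortion, $|D\varphi(x)|=0$ a.e. on $\{x:J(x,\varphi)=0\}$, so on the complementary set
\[
|\nabla(f\circ\varphi)(x)|\le\left(\frac{|D\varphi(x)|^{p}}{|J(x,\varphi)|}\right)^{\frac{1}{p}}|(\nabla f)(\varphi(x))|\,|J(x,\varphi)|^{\frac{1}{p}}.
\]
Raising to the power $q$, integrating over $\Omega$ and applying H\"older's inequality with exponents $\tfrac{p}{p-q}$ and $\tfrac{p}{q}$ gives
\[
\|\nabla(f\circ\varphi)\mid L_q(\Omega)\|\le K_{p,q}(\varphi;\Omega)\left(\,\int\limits_{\Omega}|(\nabla f)(\varphi(x))|^{p}\,|J(x,\varphi)|\,dx\right)^{\frac{1}{p}},
\]
and by the area formula for Sobolev mappings together with the injectivity of the homeomorphism $\varphi$ the last integral does not exceed $\|\nabla f\mid L_p(\widetilde\Omega)\|^{p}$, whence $\|\varphi^{\ast}f\mid L^1_q(\Omega)\|\le K_{p,q}(\varphi;\Omega)\,\|f\mid L^1_p(\widetilde\Omega)\|$. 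The estimate is then extended to all of $L^1_p(\widetilde\Omega)$ by approximation with smooth functions, using the capacitary description of Sobolev functions given above to pass to the limit of the compositions $q$-quasi-everywhere; in particular one verifies that $\varphi$ sends sets of zero $p$-capacity to sets of zero $q$-capacity, so that $\varphi^{\ast}f$ is well defined quasi-everywhere.

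For the necessity, suppose $\varphi^{\ast}:L^1_p(\widetilde\Omega)\to L^1_q(\Omega)$ is bounded with norm $\|\varphi^{\ast}\|$. For an open set $A\subseteq\Omega$ put $V=\varphi(A)$, which is open since $\varphi$ is a homeomorphism, and let $\|\varphi^{\ast}\|_{A}$ be the norm of the operator $L^1_p(V)\to L^1_q(A)$, $f\mapsto f\circ\varphi$. The crucial step is to show that, with the exponent $\kappa=\tfrac{pq}{p-q}$, the set function $A\mapsto\Phi(A):=\|\varphi^{\ast}\|_{A}^{\kappa}$ is monotone and countably additive on open subsets of $\Omega$, with $\Phi(\Omega)\le\|\varphi^{\ast}\|^{\kappa}<\infty$; this is the countable-additivity property of the composition operator norm and it is exactly what selects the exponent $\kappa$. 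Hence $\Phi$ extends to a finite Borel measure on $\Omega$, and by the Lebesgue differentiation theorem the derivative $\Phi'(x)=\lim_{r\to0}\Phi(B(x,r))/|B(x,r)|$ exists for almost every $x$, belongs to $L_1(\Omega)$, and satisfies $\int_{\Omega}\Phi'(x)\,dx\le\Phi(\Omega)$.

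It remains to read off the analytic conditions from $\Phi$. Testing the localized inequality $\|f\circ\varphi\mid L^1_q(B(x,r))\|\le\|\varphi^{\ast}\|_{B(x,r)}\,\|f\mid L^1_p\|$ against coordinate functions cut off near $\varphi(x)$ and against capacitary functions of concentric balls yields a local integral bound for the upper gradient $r\mapsto L_{\varphi}(x,r)/r$; since a homeomorphism is monotone, this forces $\varphi\in W^1_{1,\loc}(\Omega)$ and differentiability almost everywhere. Applying the same inequality at a point $x$ at which $\varphi$ is differentiable and $\Phi'(x)$ exists, with $f$ running over regularized linear functions chosen to realize $|D\varphi(x)|$, and letting $r\to0$, one obtains $|D\varphi(x)|=0$ a.e. on $\{J(x,\varphi)=0\}$ (finite distortion) together with
\[
\left(\frac{|D\varphi(x)|^{p}}{|J(x,\varphi)|}\right)^{\frac{q}{p-q}}\le C\,\Phi'(x)\qquad\text{for almost all }x\in\Omega.
\]
Integrating over $\Omega$ then gives $\int_{\Omega}\bigl(|D\varphi(x)|^{p}/|J(x,\varphi)|\bigr)^{q/(p-q)}\,dx\le C\,\Phi(\Omega)<\infty$, that is $K_{p,q}(\varphi;\Omega)<\infty$, and in fact $K_{p,q}(\varphi;\Omega)\le C'\,\|\varphi^{\ast}\|$.

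The sufficiency direction is essentially bookkeeping of the chain rule, H\"older's inequality and the area formula. The main obstacle lies in the necessity direction: (i) proving the countable additivity of $A\mapsto\|\varphi^{\ast}\|_{A}^{\kappa}$, and (ii) extracting from the bare boundedness of $\varphi^{\ast}$ the weak differentiability of $\varphi$ together with the pointwise distortion bound. Both rely on sharp capacitary estimates and on an essentially optimal choice of localized test functions, and it is at this point that the finite distortion property genuinely enters.
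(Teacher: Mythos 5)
The paper gives no proof of this theorem---it is quoted from \cite{U93}---but your outline reproduces the standard argument of that reference: chain rule plus H\"older's inequality with exponents $\tfrac{p}{p-q},\tfrac{p}{q}$ plus the area formula for the sufficiency, and for the necessity the countable additivity of the set function $A\mapsto\|\varphi^{\ast}\|_{A}^{pq/(p-q)}$ followed by Lebesgue differentiation and localized linear test functions, which is precisely the mechanism the paper itself points to as the basis of the $p\neq q$ theory. One small correction: for $\varphi^{\ast}f$ to be defined quasi-everywhere you need \emph{preimages} under $\varphi$ of sets of $p$-capacity zero in $\widetilde{\Omega}$ to have $q$-capacity zero in $\Omega$, not images.
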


{\it Let us remark one more time that in the framework of two previous theorems function of spaces $L^1_p$ are quasicontinuous.} If the concept of quasicontinuity is not used it can lead to wrong conclusions. In \cite{K12} by an analogy with Lebesgue spaces, Sobolev functions were defined up to set s measure zero (not up to sets of $p$-capacity zero).  Recall that by the Sobolev embedding theorem (see, for example, \cite{HKM,M}) any function $f\in W^1_p(\widetilde{\Omega})$, $p>n$, has an unique continuous representation \cite{M} and it is natural to consider only these continuous functions (in this case points have nonzero $p$-capacity, $p>n$).

Because weak $p$-quasiconformal mappings and weak $(p,q)$-quasiconformal mappings not necessary have the Luzin $N^{-1}$-property, i.e. pre-images of sets of a measure zero not necessary have a measure zero, this observation was used in \cite{K12} for the wrong conclusion that Theorem~\ref{CompGeom} and Theorem~\ref{CompTh} are not correct (especially in its sufficiency parts). We put here this detailed explanation because the mistake in  \cite{K12} was not fixed up to this moment. 

By an analogy with a notion of weak $p$-quasiconformal case homeomorphisms which satisfy conditions of Theorem~\ref{CompTh} are called (weak) $(p,q)$-quasiconformal mappings \cite{VU98} or mappings of bounded $(p,q)$-distortion \cite{UV10}. The geometric description similar (\ref{eq:geom}) of weak $(p,q)$-quasiconformal mappings in the case $q>n-1$ was obtained in \cite{VU98}.

Composition operators on Sobolev spaces have applications to spectral problems of elliptic equations. These applications are based on the Sobolev
type embedding theorems \cite{GGu,GU}. The following diagram illustrate the main idea of these applications:

\[\begin{array}{rcl}
W^{1}_{p}(\widetilde{\Omega}) & \stackrel{\varphi^*}{\longrightarrow} & W^1_q(\Omega) \\[2mm]
\multicolumn{1}{c}{\downarrow} & & \multicolumn{1}{c}{\downarrow} \\[1mm]
L_s(\widetilde{\Omega}) & \stackrel{(\varphi^{-1})^*}{\longleftarrow} & L_r(\Omega)
\end{array}\]

Here the operator $\varphi^{\ast}$ defined by the composition rule $\varphi^{\ast}(f)=f\circ\varphi$ is a bounded composition operator on Sobolev spaces induced by a homeomorphism $\varphi$ of a "good" domain $\Omega$ (for example the unit ball) to a "bad" domain  $\widetilde{\Omega}$ and the operator $(\varphi^{-1})^{\ast}$ defined by the composition rule $(\varphi^{-1})^{\ast}(f)=f\circ\varphi^{-1}$ is a bounded composition operator on Lebesgue spaces induced by the inverse homeomorphism. 

This approach allows us to obtain estimates of stability of constants in Sobolev-Poincar\'e inequalities in domains, that in its turn give us opportunity to estimates stability of Neumann eigenvalues of the $p$-Laplace operators.

In our recent works \cite{BGU1,BGU2,GPU17,GU16,GU2016} the spectral stability problem and the lower estimates of Neumann eigenvalues in planar domains were studied. In space domains our results are more modest. Some spectral estimates in space non convex domains were obtained \cite{GU17} with the help of weak $p$-quasiconformal mappings theory.

The geometric theory of composition operators on Sobolev spaces is closely connected with the $A_{p,q}$-Ball's classes \cite{B76,B81} and has applications in non-linear elasticity (see, for example, \cite{CHM,GU10,V00}).

In the last decade the composition operators theory on generalizations of Sobolev spaces, like Besov spaces and Triebel-Lizorkin spaces, \cite{HK13,KKSS14,KYZ11,KXZZ17,OP17} was under consideration. These types of composition operators have applications to the Calder\'on inverse conductivity problem \cite{C80}.

\subsection{Composition operators and capacity inequalities}
Composition operators on Sobolev spaces allow an alternative capacitory description. Recall the notion of a variational $p$-capacity \cite{GResh}.

A condenser in the domain $\Omega\subset \mathbb R^n$ is the pair $(F_0,F_1)$ of connected closed relatively to $\Omega$ sets $F_0,F_1\subset \Omega$. A continuous function $f\in L_p^1(\Omega)$ is called an admissible function for the condenser $(F_0,F_1)$,
if the set $F_i\cap \Omega$ is contained in some connected component of the set $\operatorname{Int}\{x\vert u(x)=i\}$,\ $i=0,1$. We call as the $p$-capacity of the condenser $(F_0,F_1)$ relatively to domain $\Omega$
the following quantity:
$$
{{\cp}}_p(F_0,F_1;\Omega)=\inf\|f\vert L_p^1(\Omega)\|^p.
$$
Here the greatest lower bond is taken over all functions admissible for the condenser $(F_0,F_1)\subset\Omega$. If the condenser has no admissible functions we put the capacity equal to infinity. 

The following theorems give the capacitory description of the composition operators on Sobolev spaces. 

\begin{thm}\cite{GGR95}
\label{thm:CapacityDescPP}
Let $1<p<\infty$.
A homeomorphism $\varphi :\Omega\to \widetilde{\Omega}$
generates a bounded composition operator
$$
\varphi^{\ast}: L^1_p(\widetilde{\Omega})\to L^1_p(\Omega)
$$
if and only if for every condenser 
$(F_0,F_1)\subset\widetilde{\Omega}$
the inequality
$$
\cp_{p}^{1/p}(\varphi^{-1}(F_0),\varphi^{-1}(F_1);\Omega)
\leq K\cp_{p}^{1/p}(F_0,F_1;\widetilde{\Omega})
$$
holds. 
\end{thm}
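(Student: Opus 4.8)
The plan is to prove the two implications separately: the necessity is elementary, and the sufficiency rests on the capacitary description of the Sobolev seminorm itself.

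\emph{Necessity.} Assuming $\varphi^{\ast}$ is bounded with norm $K_p(\Omega)$, I would fix a condenser $(F_0,F_1)\subset\widetilde\Omega$ and any continuous admissible function $f\in L^1_p(\widetilde\Omega)$ for it. Because $\varphi$ is a homeomorphism, $f\circ\varphi$ is continuous, the sets $\varphi^{-1}(F_i)$ are connected and closed relatively to $\Omega$, and $\operatorname{Int}\{f\circ\varphi=i\}=\varphi^{-1}\bigl(\operatorname{Int}\{f=i\}\bigr)$ with connected components mapped to connected components; hence $\varphi^{\ast}f=f\circ\varphi$ is admissible for $(\varphi^{-1}(F_0),\varphi^{-1}(F_1))\subset\Omega$. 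Boundedness of $\varphi^{\ast}$ then gives $\|f\circ\varphi\mid L^1_p(\Omega)\|\le K_p(\Omega)\|f\mid L^1_p(\widetilde\Omega)\|$, and passing to the infimum over such $f$ yields the asserted inequality with $K=K_p(\Omega)$.

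\emph{Sufficiency.} Assume now the capacitary inequality for all condensers. First I would note that a set $E\subset\widetilde\Omega$ with $\cp_p(E;\widetilde\Omega)=0$ lies in open sets of arbitrarily small $p$-capacity, so an exhaustion by condensers upgrades the hypothesis to $\cp_p(\varphi^{-1}(E);\Omega)=0$; hence for quasicontinuous $f\in L^1_p(\widetilde\Omega)$ the composition $f\circ\varphi$ is defined quasi-everywhere in $\Omega$. For the norm estimate the key tool is the capacitary description of the seminorm (cf.\ \cite{M,GResh}): for quasicontinuous $u$ on a domain $G$, $u\in L^1_p(G)$ if and only if
\[
\mathcal{V}_p(u;G):=\sup\sum_i\cp_p\bigl(\{u\le s_i\},\{u\ge t_i\};G\bigr)\,(t_i-s_i)^p<\infty,
\]
the supremum over finite families of pairwise disjoint segments $[s_i,t_i]$ in the range of $u$, and then $\mathcal{V}_p(u;G)\asymp\|u\mid L^1_p(G)\|^p$ with constants depending only on $n$ and $p$; the bound $\mathcal{V}_p(u;G)\le\|u\mid L^1_p(G)\|^p$ is immediate from the admissible functions $\min\{1,\max\{0,(u-s_i)/(t_i-s_i)\}\}$, and the reverse is the substantial part. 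Applying this to $u=f\circ\varphi$ on $\Omega$, one has $\{f\circ\varphi\le s\}=\varphi^{-1}\{f\le s\}$, $\{f\circ\varphi\ge t\}=\varphi^{-1}\{f\ge t\}$, and $\varphi$ carries the corresponding condensers in $\Omega$ onto those in $\widetilde\Omega$ preserving connectedness of components, so the hypothesis applied term by term gives
\[
\mathcal{V}_p(f\circ\varphi;\Omega)\le K^p\,\mathcal{V}_p(f;\widetilde\Omega)\le K^p\,\|f\mid L^1_p(\widetilde\Omega)\|^p.
\]
Thus $f\circ\varphi\in L^1_p(\Omega)$ with $\|f\circ\varphi\mid L^1_p(\Omega)\|\le C(n,p)\,K\,\|f\mid L^1_p(\widetilde\Omega)\|$, i.e.\ $\varphi^{\ast}$ is bounded; a more careful argument, via the $p$-capacitary potentials of the level condensers and the co-area formula reducing matters to monotone functions, lets one take $K$ equal to the operator norm.

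I expect the main obstacle to be the sufficiency direction, and within it the capacitary characterization of $\|u\mid L^1_p\|$ together with matching its level-set condensers to the connected-set condensers used here; I would deal with this by first proving the estimate on a dense class of $f$ (e.g.\ $f\in C^\infty(\widetilde\Omega)\cap L^1_p(\widetilde\Omega)$, whose truncations have manageable level sets) and then passing to the limit, using the exhaustion step above. An alternative route, valid at least for $p>n-1$, would be to test the capacitary inequality on spherical ring condensers $\bigl(\overline{B(x,r)},\,\Omega\setminus B(x,\lambda r)\bigr)$ and let $r\to0$ to recover condition \eqref{eq:geom} of Theorem~\ref{CompGeom}; then $\varphi$ is weakly $p$-quasiconformal and the bound follows from the chain rule $|\nabla(f\circ\varphi)(x)|\le|\nabla f(\varphi(x))|\,|D\varphi(x)|$ and the change of variables formula. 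The subtle step there is the weak differentiability (ACL property) of $\varphi$, which for small $p$ requires an a priori regularity hypothesis on the mapping.
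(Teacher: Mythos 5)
The paper does not prove Theorem~\ref{thm:CapacityDescPP}; it is imported verbatim from \cite{GGR95}, so there is no in-paper argument to compare against. Your outline is, in substance, the route taken in that reference and in \cite{GResh}: necessity by pushing admissible functions forward through $\varphi^{\ast}$ (your argument here is complete and correct, including the check that $f\circ\varphi$ inherits admissibility because a homeomorphism preserves components of $\operatorname{Int}\{f=i\}$), and sufficiency via the capacitary characterization of the $L^1_p$-seminorm through level-set condensers. Two caveats. First, essentially the entire difficulty of the sufficiency direction is concentrated in the reverse inequality $\|u\mid L^1_p(G)\|^p\leq C\,\mathcal{V}_p(u;G)$, which you invoke but do not prove; as a classical lemma of Maz'ya/Gol'dshtein--Reshetnyak this is a legitimate citation, but your proposal should be read as reducing the theorem to that lemma rather than proving it, and note that to recover the operator norm $K_{p}=K$ exactly (rather than $C(n,p)K$) one needs the sharp form of that equivalence, i.e.\ equality of the supremum with the seminorm. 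Second, the condensers of this paper are required to have \emph{connected} plates and admissible functions constrained to single components, whereas the level sets $\{f\le s_i\}$, $\{f\ge t_i\}$ of a generic Sobolev function are not connected; you flag this mismatch but do not resolve it, and the resolution (decomposing into components and using that the extremal functions of distinct components of $\{s_i<f<t_i\}$ have disjoint supports, so the capacities add) is a genuine, if routine, piece of the argument that must be written out. Your alternative route through ring condensers and Theorem~\ref{CompGeom} is fine as a sanity check but, as you say, only covers $p>n-1$ and so cannot replace the capacitary argument on the full range $1<p<\infty$.
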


\begin{thm}\cite{VU98}
\label{thm:CapacityDescPQ}
Let $1<q<p<\infty$.
A homeomorphism $\varphi :\Omega\to \widetilde{\Omega}$
generates a bounded composition operator
$$
\varphi^{\ast}: L^1_p(\widetilde{\Omega})\to L^1_q(\Omega)
$$
if and only if
there exists a bounded monotone countable-additive set function
$\Phi$ defined on open subsets of $\widetilde{\Omega}$
such that for every condenser 
$(F_0,F_1)\subset \widetilde{\Omega}$
the inequality
$$
\cp_{q}^{1/q}(\varphi^{-1}(F_0),\varphi^{-1}(F_1);\Omega)
\leq\Phi(\widetilde{\Omega}\setminus(F_0\cup F_1))^{\frac{p-q}{pq}}
\cp_{p}^{1/p}(F_0,F_1;\widetilde{\Omega})
$$
holds. 
\end{thm}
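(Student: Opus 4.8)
I would prove both implications by passing through the analytic description of Theorem~\ref{CompTh}. For \emph{necessity}, suppose $\varphi^{\ast}\colon L^{1}_{p}(\widetilde\Omega)\to L^{1}_{q}(\Omega)$ is bounded; by Theorem~\ref{CompTh} the homeomorphism $\varphi$ lies in $W^{1}_{1,\loc}(\Omega)$, has finite distortion, and $K_{p,q}(\varphi;\Omega)<\infty$. I would then define, for an open set $W\subset\widetilde\Omega$,
\[
\Phi(W)=\int\limits_{\varphi^{-1}(W)}\left(\frac{|D\varphi(x)|^{p}}{|J(x,\varphi)|}\right)^{\frac{q}{p-q}}dx .
\]
Since $\varphi$ is a homeomorphism, preimages of pairwise disjoint open sets are pairwise disjoint open sets, so $\Phi$ is monotone and countably additive on open subsets of $\widetilde\Omega$, and it is bounded because $\Phi(\widetilde\Omega)=K_{p,q}(\varphi;\Omega)^{\frac{pq}{p-q}}<\infty$.

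Fix a condenser $(F_{0},F_{1})\subset\widetilde\Omega$, put $U=\widetilde\Omega\setminus(F_{0}\cup F_{1})$ (an open set), and take any admissible function $f$ for $(F_{0},F_{1})$; then $f\circ\varphi$ is admissible for $(\varphi^{-1}(F_{0}),\varphi^{-1}(F_{1}))\subset\Omega$. As $F_{i}$ lies in the level set $\{f=i\}$, the weak gradient $\nabla f$ vanishes a.e.\ on $F_{0}\cup F_{1}$, and since $|D\varphi|=0$ a.e.\ on the $\varphi$-preimage of any null set one gets $\nabla(f\circ\varphi)=0$ a.e.\ on $\Omega\setminus\varphi^{-1}(U)$. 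The chain rule for the composition gives $|\nabla(f\circ\varphi)(x)|\le|D\varphi(x)|\,|\nabla f(\varphi(x))|$ a.e., the right-hand side vanishing a.e.\ on $\{J(\cdot,\varphi)=0\}$; writing $|D\varphi|^{q}|\nabla f(\varphi)|^{q}=\bigl(|D\varphi|^{p}/|J(\cdot,\varphi)|\bigr)^{q/p}\cdot|J(\cdot,\varphi)|^{q/p}|\nabla f(\varphi)|^{q}$ on $\{J\ne0\}$ and applying H\"older's inequality with exponents $p/q$ and $p/(p-q)$ on $\varphi^{-1}(U)$ (off which $\nabla(f\circ\varphi)$ vanishes a.e.) would yield
\[
\int\limits_{\Omega}|\nabla(f\circ\varphi)|^{q}\,dx\le\Phi(U)^{\frac{p-q}{p}}\left(\int\limits_{\varphi^{-1}(U)}|\nabla f(\varphi(x))|^{p}\,|J(x,\varphi)|\,dx\right)^{\frac{q}{p}}\le\Phi\bigl(\widetilde\Omega\setminus(F_{0}\cup F_{1})\bigr)^{\frac{p-q}{p}}\,\|\nabla f\mid L_{p}(\widetilde\Omega)\|^{q},
\]
the last step using the change of variables inequality $\int_{\varphi^{-1}(U)}u(\varphi(x))|J(x,\varphi)|\,dx\le\int_{U}u\,dy$ for $u\ge0$, valid for Sobolev homeomorphisms (see \cite{VU02}). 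Taking $q$-th roots and the infimum over all admissible $f$ gives the asserted capacity inequality.

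For \emph{sufficiency}, assume the capacity inequality holds with some bounded, monotone, countably additive $\Phi$. By Theorem~\ref{CompTh} it is enough to establish $\varphi\in W^{1}_{1,\loc}(\Omega)$, the finite-distortion property, and the bound $K_{p,q}(\varphi;\Omega)\le c\,\Phi(\widetilde\Omega)^{\frac{p-q}{pq}}$. I would test the capacity inequality on spherical ring condensers: for $x_{0}\in\Omega$, $y_{0}=\varphi(x_{0})$ and small radii $0<a<b$ with $\overline{B(y_{0},b)}\subset\widetilde\Omega$, the condenser $\bigl(\overline{B(y_{0},a)},\widetilde\Omega\setminus B(y_{0},b)\bigr)$ has $p$-capacity comparable to the explicitly computable capacity of the Euclidean ring $B(y_{0},b)\setminus\overline{B(y_{0},a)}$, while, $\varphi$ being a homeomorphism, the corresponding preimage condenser contains a genuine Euclidean ring condenser centred at $x_{0}$ whose inner and outer radii are controlled by $L_{\varphi}(x_{0},\cdot)$ and by the modulus of continuity of $\varphi^{-1}$ at $y_{0}$, so that monotonicity of the $q$-capacity bounds the left-hand side from below. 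Comparing the two sides with $\Phi\bigl(B(y_{0},b)\bigr)$ on the right and letting $b\downarrow a$, a Vitali covering argument together with the Lebesgue differentiation theorem (applied to the absolutely continuous part of $\Phi$) yields, for a.e.\ $x_{0}$, a pointwise estimate of the infinitesimal dilations of $\varphi$ at $x_{0}$ by the density of $\Phi$; in particular $\limsup_{r\to0}L_{\varphi}(x_{0},r)/r<\infty$ a.e., which gives a.e.\ differentiability, membership in $W^{1}_{1,\loc}(\Omega)$, and — since the Euclidean dilation must vanish wherever $J(\cdot,\varphi)=0$ — the finite-distortion property. Summing the same local estimates over a Vitali cover of $\Omega$, now using the countable additivity of $\Phi$ to collapse the sum into $\Phi(\widetilde\Omega)$, produces $K_{p,q}(\varphi;\Omega)\le c\,\Phi(\widetilde\Omega)^{\frac{p-q}{pq}}<\infty$, and Theorem~\ref{CompTh} then delivers the boundedness of $\varphi^{\ast}$.

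The necessity is routine once Theorem~\ref{CompTh} is available: it is just the chain rule, H\"older's inequality, and the change of variables formula. The main obstacle is the sufficiency, specifically the passage from the ring-condenser estimates to the a.e.\ differentiability and the finite-distortion property of $\varphi$, and then the summation of the local capacitary estimates against $\Phi$ to recover the finiteness of $K_{p,q}(\varphi;\Omega)$. This is exactly where the hypothesis that $\Phi$ be a bounded \emph{countably additive} set function — rather than merely a pointwise capacitary constant — is indispensable, and it is the technical core of the arguments of \cite{U93} and \cite{VU98}.
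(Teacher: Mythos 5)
The paper does not prove this theorem at all --- it is quoted from \cite{VU98} --- so there is no in-paper argument to compare with; I am measuring your proposal against the proof in the cited sources. Your \emph{necessity} direction is essentially the standard argument and is correct: boundedness of $\varphi^{\ast}$ together with Theorem~\ref{CompTh} gives $\varphi\in W^1_{1,\loc}(\Omega)$, finite distortion and $K_{p,q}(\varphi;\Omega)<\infty$; the set function $\Phi(W)=\int_{\varphi^{-1}(W)}\bigl(|D\varphi|^{p}/|J(\cdot,\varphi)|\bigr)^{q/(p-q)}dx$ is bounded, monotone and countably additive; and the chain rule, H\"older's inequality with exponents $p/(p-q)$ and $p/q$ on $\varphi^{-1}(U)$, and the area-formula inequality give exactly the stated estimate with the correct exponent $\frac{p-q}{pq}$ after taking $q$-th roots. (Your appeal to ``$|D\varphi|=0$ a.e.\ on preimages of null sets'' is an unnecessary detour: by the definition of an admissible function in this paper, $f$ is constant on an open neighbourhood of each plate $F_i$, so $f\circ\varphi$ is constant on the open preimage of that neighbourhood and its gradient vanishes there.)

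The \emph{sufficiency} direction has a genuine gap, and the template you chose is the wrong one. You attempt to recover the analytic conditions of Theorem~\ref{CompTh} from ring condensers $(\overline{B(y_0,a)},\widetilde\Omega\setminus B(y_0,b))$ via pointwise dilation estimates, i.e.\ you mimic the proof of the geometric description in Theorem~\ref{CompGeom}. That method requires lower bounds for the $q$-capacity of the preimage condenser in terms of the diameters of its plates, and such bounds (together with the deduction of a.e.\ differentiability from them) are available only for $q>n-1$; this is precisely why Theorem~\ref{CompGeom} carries the restriction $n-1<p$ and why the case $1<p<n-1$ needs an a priori differentiability hypothesis. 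The present theorem is asserted for all $1<q<p<\infty$, so your route cannot cover it. Moreover, even where the pointwise estimates exist, passing from a bound on $\limsup_{r\to 0}L_{\varphi}(x_0,r)/r$ by the density of $\Phi$ to the finiteness of the integral $K_{p,q}(\varphi;\Omega)$ is not a matter of ``summing over a Vitali cover'': one must identify the a.e.\ derivative of the pulled-back set function with $\bigl(|D\varphi|^{p}/|J(\cdot,\varphi)|\bigr)^{q/(p-q)}$ via the volume derivative and control the set $\{J=0\}$, none of which you carry out. The actual proof in \cite{U93,VU98} avoids differentiability entirely: for an arbitrary quasicontinuous $f\in L^1_p(\widetilde\Omega)$ one forms the dyadic level-set condensers $F_{0,k}=\{f\geq 2^{k+1}\}$, $F_{1,k}=\{f\leq 2^{k}\}$, invokes Maz'ya's two-sided capacitary representation of the seminorms $\|\nabla(f\circ\varphi)\mid L_q(\Omega)\|$ and $\|\nabla f\mid L_p(\widetilde\Omega)\|$ through these condensers, applies the assumed capacity inequality to each of them, and sums using H\"older's inequality for series with exponents $p/q$ and $p/(p-q)$; since the open sets $\{2^{k}<f<2^{k+1}\}$ are pairwise disjoint, countable additivity collapses $\sum_k\Phi(\widetilde\Omega\setminus(F_{0,k}\cup F_{1,k}))$ into $\Phi(\widetilde\Omega)$. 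This yields the boundedness of $\varphi^{\ast}$ directly, for all $1<q<p<\infty$, without routing back through Theorem~\ref{CompTh}.
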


This capacity inequalities demonstrate a close connection of  mappings that generate bounded composition operators on Sobolev spaces and so-called
$Q$-ho\-me\-o\-mor\-phisms \cite{MRSY}. Descriptions of  $Q$-homeomorphisms are based on the capacitory (moduli) distortion property are these classes were intensively studied at last decades (see, for example, \cite{K86,Sa15,S16}).

\section{Spectral estimates of the $p$-Laplace operator}

In this section we give spectral estimates of the $p$-Laplace operator on the base Sobolev-Poincar\'e inequalities.

\subsection{The general case}

Recall that a bounded domain $\Omega\subset\mathbb R^n$ is called an $(r,q)$-Sobolev-Poincar\'e domain, $1\leq r,q\leq \infty$, if for any function $f\in L^1_q(\Omega)$, the $(r,q)$-Sobolev-Poincar\'e inequality
$$
\inf\limits_{c\in\mathbb R}\|f-c\mid L_r(\Omega)\|\leq B_{r,q}(\Omega)\|\nabla f\mid L_p(\Omega)\|
$$
holds.

 Weak $(p,q)$-quasiconformal mappings permits us to "transfer" this property from one domain to another.

\begin{thm}
\label{thm:PoincareEnCompPP} 
Let a bounded domain $\Omega\subset\mathbb R^n$ be a $(r,q)$-Sobolev-Poncar\'e domain, $1<q\leq r<\infty$, and there exists a weak $(p,q)$-quasiconformal mapping $\varphi: \Omega\to\widetilde{\Omega}$ of a domain $\Omega$ onto a bounded domain $\widetilde{\Omega}$, possesses the Luzin $N$-property and such that 
$$
M_r(\Omega)=\operatorname{ess}\sup\limits_{x\in \Omega}\left|J(x,\varphi)\right|^{\frac{1}{r}}<\infty.
$$
Then in the domain $\widetilde{\Omega}$ the $(r,p)$-Sobolev-Poincar\'e inequality 
\begin{equation}
\inf\limits_{c\in\mathbb R}\biggl(\int\limits _{\widetilde{\Omega}}|f(x)-c|^{r}~dx \biggr)^{\frac{1}{r}}\leq B_{r,p}(\widetilde{\Omega})\biggl(\int\limits _{\widetilde{\Omega}}
|\nabla f(x)|^{p}~dx\biggr)^{\frac{1}{p}},\,\, f\in W^1_p(\widetilde{\Omega}),
\label{eq:WPPI}
\end{equation} 
holds and 
$$
B_{r,p}(\widetilde{\Omega})\leq K_{p,q}(\Omega)M_r(\Omega) B_{r,q}({\Omega}).
$$
Here $B_{r,q}({\Omega})$ is the best constant in the $(r,q)$-Sobolev-Poincar\'e inequality in the domain $\Omega$.
\end{thm}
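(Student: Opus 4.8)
The plan is to use the mapping $\varphi$ to transport the Sobolev--Poincar\'e inequality from $\Omega$ to $\widetilde\Omega$. One pulls an arbitrary test function on $\widetilde\Omega$ back to $\Omega$ via the composition operator $\varphi^{\ast}$, applies the assumed $(r,q)$-inequality in $\Omega$, and then pushes the resulting $L_r$-norm forward again by the change of variables formula, absorbing the Jacobian factor into the constant $M_r(\Omega)$.

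In detail, fix $f\in W^1_p(\widetilde\Omega)$ and work with its quasicontinuous representative. Since $\varphi$ is a weak $(p,q)$-quasiconformal mapping, Theorem~\ref{CompTh} (or, when $q=p$, Theorem~\ref{CompGeom}) says that $\varphi\in W^1_{1,\loc}(\Omega)$, has finite distortion, and induces a bounded composition operator $\varphi^{\ast}\colon L^1_p(\widetilde\Omega)\to L^1_q(\Omega)$ of operator norm at most $K_{p,q}(\Omega)$. Hence $g:=\varphi^{\ast}(f)=f\circ\varphi$ is defined quasi-everywhere on $\Omega$, belongs to $L^1_q(\Omega)$, and
\[
\|\nabla g\mid L_q(\Omega)\|\le K_{p,q}(\Omega)\,\|\nabla f\mid L_p(\widetilde\Omega)\|.
\]
Since $\Omega$ is an $(r,q)$-Sobolev--Poincar\'e domain (and $r>1$ with $\Omega$ bounded, so a minimizing constant $c_0\in\mathbb R$ exists),
\[
\biggl(\int\limits_{\Omega}|g(x)-c_0|^{r}\,dx\biggr)^{1/r}\le B_{r,q}(\Omega)\,\|\nabla g\mid L_q(\Omega)\|\le B_{r,q}(\Omega)\,K_{p,q}(\Omega)\,\|\nabla f\mid L_p(\widetilde\Omega)\|.
\]

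To carry this estimate back to $\widetilde\Omega$ I would invoke that $\varphi$ is a homeomorphism of class $W^1_{1,\loc}$ with the Luzin $N$-property, for which the change of variables (area) formula holds: $\int_{\widetilde\Omega}u(y)\,dy=\int_{\Omega}u(\varphi(x))\,|J(x,\varphi)|\,dx$ for every nonnegative measurable $u$ on $\widetilde\Omega$, the Banach indicatrix of a homeomorphism being $1$ almost everywhere. Applying this to $u=|f-c_0|^{r}$ (with the quasicontinuous representative of $f$), using $u\circ\varphi=|g-c_0|^{r}$ a.e. and the bound $|J(x,\varphi)|\le M_r(\Omega)^{r}$ a.e., gives
\[
\int\limits_{\widetilde\Omega}|f(y)-c_0|^{r}\,dy=\int\limits_{\Omega}|g(x)-c_0|^{r}\,|J(x,\varphi)|\,dx\le M_r(\Omega)^{r}\int\limits_{\Omega}|g(x)-c_0|^{r}\,dx.
\]
Taking $r$-th roots and chaining the displayed inequalities yields $\inf_{c\in\mathbb R}\bigl(\int_{\widetilde\Omega}|f-c|^{r}\,dy\bigr)^{1/r}\le K_{p,q}(\Omega)\,M_r(\Omega)\,B_{r,q}(\Omega)\,\|\nabla f\mid L_p(\widetilde\Omega)\|$, which is exactly (\ref{eq:WPPI}) together with the stated bound on $B_{r,p}(\widetilde\Omega)$; a density argument reduces a general $f\in W^1_p(\widetilde\Omega)$ to a smooth one.

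The genuinely routine points — existence of the minimizer $c_0$, the almost-everywhere chain rule for $\nabla(f\circ\varphi)$, and the smoothing step — are standard. The step I expect to demand the most care is the change of variables identity and, hand in hand with it, the compatibility of the quasicontinuously defined composition $f\circ\varphi$ with it. This is exactly where the hypothesis that $\varphi$ has the Luzin $N$-property (which weak $(p,q)$-quasiconformal mappings need not possess on their own) enters essentially, and I would justify it through the area formula for Sobolev homeomorphisms, verifying that the choice of representative of $f$ on the exceptional set — which has $p$-capacity zero, hence Lebesgue measure zero — affects neither side of the identity.
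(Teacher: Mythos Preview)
Your argument is correct and follows essentially the same route as the paper: pull $f$ back via the bounded composition operator $\varphi^{\ast}$, apply the $(r,q)$-Sobolev--Poincar\'e inequality on $\Omega$, and push the $L_r$-norm forward with the change-of-variables formula, bounding the Jacobian by $M_r(\Omega)^r$. The only cosmetic difference is the order of the steps (the paper changes variables first and then applies Poincar\'e, whereas you apply Poincar\'e first with a specific minimizer $c_0$ and then change variables), and you are more explicit about the role of the Luzin $N$-property and the quasicontinuous representative than the paper, which simply invokes the change-of-variable formula without further comment.
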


\begin{proof}
Let $f\in L^1_p(\widetilde{\Omega})$. By the conditions of the theorem there exists a weak $(p,q)$-quasiconformal homeomorphism 
$\varphi: \Omega\to \widetilde{\Omega}$. Hence, the composition operator 
$$
\varphi^{\ast}: L^1_p(\widetilde{\Omega})\to L^1_q(\Omega)
$$
is bounded. Because $\Omega$ is a bounded $(r,q)$-Sobolev-Poincar\'e domain  $g=\varphi^{\ast}(f)\in W^1_q(\Omega)$.

Using the change of variable formula we obtain: 

\begin{multline}
\inf\limits_{c\in \mathbb R}\biggl(\int\limits_{\widetilde{\Omega}}|f(y)-c|^r~dy\biggr)^{\frac{1}{r}}=\inf\limits_{c\in \mathbb R}
\biggl(\int\limits_{\Omega}|f(\varphi(x))-c|^r|J(x,\varphi)|~dx\biggr)^{\frac{1}{r}}\\
\leq \ess\sup\limits_{x\in \Omega}|J(x,\varphi)|^{\frac{1}{r}}
\inf\limits_{c\in \mathbb R}\biggl(\int\limits_{\Omega}|f(\varphi(x))-c|^r~dx\biggr)^{\frac{1}{r}}
=M_r(\Omega)\inf\limits_{c\in \mathbb R}\biggl(\int\limits_{\Omega}|g(x)-c|^r~dx\biggr)^{\frac{1}{r}}.
\nonumber
\end{multline}
Because the domain $\Omega$ is a $(r,q)$-Sobolev-Poincar\'e domain we have
$$
\inf\limits_{c\in \mathbb R}\biggl(\int\limits_{\Omega}|g(x)-c|^r~dx\biggr)^{\frac{1}{r}}\leq B_{r,q}(\Omega)\biggl(\int\limits_{\Omega}|\nabla g(x)|^q~dx\biggr)^{\frac{1}{q}}.
$$
Hence
$$
\inf\limits_{c\in \mathbb R}\biggl(\int\limits_{\widetilde{\Omega}}|f(y)-c|^r~dy\biggr)^{\frac{1}{r}}\leq
M_r(\Omega)B_{r,q}(\Omega) \|g\mid L^1_q(\Omega)\|.
$$
By Theorem \ref{CompTh} 
$$
\|g\mid L^1_q(\Omega)\| \leq K_{p,q}(\Omega)\|f\mid L^1_p(\widetilde{\Omega})\|.
$$

Therefore
$$
\inf\limits_{c\in \mathbb R}\biggl(\int\limits_{\widetilde{\Omega}}|f(y)-c|^r~dy\biggr)^{\frac{1}{r}}\leq
K_{p,q}(\Omega)M_r(\Omega)B_{r,q}(\Omega)\biggl(\int\limits _{\widetilde{\Omega}}
|\nabla f(x)|^{p}~dx\biggr)^{\frac{1}{p}}.
$$
\end{proof}

The Theorem~\ref{thm:PoincareEnCompPP} immediately implies the following lower estimate for $\mu_{p}(\tilde{\Omega})$:

\begin{thm} 
\label{thm:estimate}
Suppose that there exists a $(p,q)$-quasiconformal homeomorphism  $\varphi: \Omega\to\widetilde{\Omega}$, of a $(r,q)$-Sobolev-Poncar\'e domain $\Omega\subset\mathbb R^n$ onto $\widetilde{\Omega}$, possesses the Luzin $N$-property and such that 
$$
M_p(\Omega)=\operatorname{ess}\sup\limits_{x\in \Omega}\left|J(x,\varphi)\right|^{\frac{1}{p}}<\infty.
$$
Then 
$$
{\mu_{p}(\widetilde{\Omega})}\geq \left(K^p_{p,q}(\Omega) M^p_p(\Omega)B^p_{p,q}(\Omega)\right)^{-1}.
$$
\end{thm}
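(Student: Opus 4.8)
The plan is to read the estimate directly off Theorem~\ref{thm:PoincareEnCompPP} by specializing the exponent $r$ there to $p$, and then to invert the resulting Sobolev--Poincar\'e constant by means of the variational identity $\mu_p(\widetilde{\Omega})=B^{-p}_{p,p}(\widetilde{\Omega})$ recalled in the Introduction (see \cite{BCT15}). In other words, the proof is a one-line specialization of the previous theorem followed by the spectral characterization of the first nontrivial Neumann eigenvalue.

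First I would apply Theorem~\ref{thm:PoincareEnCompPP} with $r=p$. With this choice its hypotheses are exactly those assumed here: $\Omega$ is a bounded $(p,q)$-Sobolev--Poincar\'e domain, $\varphi\colon\Omega\to\widetilde{\Omega}$ is a weak $(p,q)$-quasiconformal homeomorphism possessing the Luzin $N$-property, and $M_r(\Omega)=M_p(\Omega)=\operatorname{ess}\sup_{x\in\Omega}|J(x,\varphi)|^{1/p}<\infty$. Consequently $\widetilde{\Omega}$ satisfies the $(p,p)$-Sobolev--Poincar\'e inequality~\eqref{eq:WPPI}, with optimal constant obeying
$$
B_{p,p}(\widetilde{\Omega})\leq K_{p,q}(\Omega)\,M_p(\Omega)\,B_{p,q}(\Omega).
$$

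It remains to convert this into a spectral bound. Since $\widetilde{\Omega}$ is a bounded domain for which a $(p,p)$-Sobolev--Poincar\'e inequality holds, the first nontrivial Neumann eigenvalue of the $p$-Laplace operator on $\widetilde{\Omega}$ is well defined and equals $\mu_p(\widetilde{\Omega})=B^{-p}_{p,p}(\widetilde{\Omega})$; applying the decreasing map $t\mapsto t^{-p}$ to the inequality above gives
$$
\mu_p(\widetilde{\Omega})=B^{-p}_{p,p}(\widetilde{\Omega})\geq\bigl(K_{p,q}(\Omega)\,M_p(\Omega)\,B_{p,q}(\Omega)\bigr)^{-p}=\bigl(K^p_{p,q}(\Omega)\,M^p_p(\Omega)\,B^p_{p,q}(\Omega)\bigr)^{-1},
$$
which is the asserted estimate. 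There is no real obstacle in this argument --- all the substantive work is already contained in Theorem~\ref{thm:PoincareEnCompPP}; the single point worth spelling out is the passage from ``$\widetilde{\Omega}$ admits a finite $(p,p)$-Poincar\'e constant'' to ``$\mu_p(\widetilde{\Omega})$ is the reciprocal $p$-th power of the optimal such constant'', which is precisely the characterization borrowed from \cite{BCT15}.
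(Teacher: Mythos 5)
Your proposal is correct and coincides with the paper's own argument: the authors state that Theorem~\ref{thm:estimate} "immediately" follows from Theorem~\ref{thm:PoincareEnCompPP}, which is exactly your specialization $r=p$ combined with the characterization $\mu_p(\widetilde{\Omega})=B^{-p}_{p,p}(\widetilde{\Omega})$ from the Introduction. No further commentary is needed.
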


Boundedness of a Jacobian of a weak $(p,q)$-quasiconformal mapping is a sufficient but restrictive assumption. In this case $\varphi$ is a Lipschitz mapping this condition holds but an image of a Lipschitz domain can not be a domain with external singularities. 

We shall use a more flexible class of weak $(p,q)$-quasiconformal mappings with an integrable Jacobian, which allows us to map Lipschitz domains onto cusp domains. 

\begin{thm}
\label{thm:PoincareEnCompPQ} 
Let a bounded domain $\Omega\subset\mathbb R^n$ be a $(r,q)$-Sobolev-Poncar\'e domain, $1<q\leq r<\infty$, and there exists a weak $(p,q)$-quasiconformal homeomorphism $\varphi: \Omega\to\widetilde{\Omega}$ of a domain $\Omega$ onto a bounded domain $\widetilde{\Omega}$, possesses the Luzin $N$-property and such that
$$ 
M_{r,s}(\Omega)=\biggl(\int\limits _{\Omega}\left|J(x,\varphi)\right|^{\frac{r}{r-s}}~dx\biggl)^{\frac{r-s}{rs}}<\infty
$$
for some $1\leq s<r$.
Then in the domain $\widetilde{\Omega}$ the $(s,p)$-Sobolev-Poincar\'e inequality
\begin{equation}
\biggl(\int\limits _{\widetilde{\Omega}}|f(x)-f_{\widetilde{\Omega}}|^{s}~dx \biggr)^{\frac{1}{s}}\leq B_{s,p}(\widetilde{\Omega})\biggl(\int\limits _{\widetilde{\Omega}}
|\nabla f(x)|^{p}~dx\biggr)^{\frac{1}{p}},\,\, f\in W^1_p(\widetilde{\Omega}),
\label{eq:WPI}
\end{equation} 
holds and 
$$
B_{s,p}(\widetilde{\Omega})\leq K_{p,q}(\Omega)M_{r,s}(\Omega)B_{r,q}(\Omega).
$$
Here $B_{r,q}(\Omega)$ is the best constant in the $(r,q)$-Sobolev-Poincar\'e inequality in the domain $\Omega$.

\end{thm}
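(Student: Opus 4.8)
The plan is to follow, with a single modification, the scheme used in the proof of Theorem~\ref{thm:PoincareEnCompPP}: the essential supremum bound on the Jacobian is replaced by a H\"older inequality, while the rest of the argument is unchanged. Fix $f\in W^1_p(\widetilde{\Omega})$, taken in its quasicontinuous representative. Since $\varphi:\Omega\to\widetilde{\Omega}$ is weak $(p,q)$-quasiconformal, by Theorem~\ref{CompTh} it lies in $W^1_{1,\loc}(\Omega)$, has finite distortion, and induces a bounded composition operator $\varphi^{\ast}:L^1_p(\widetilde{\Omega})\to L^1_q(\Omega)$ with norm $K_{p,q}(\Omega)$. As $\Omega$ is bounded and is an $(r,q)$-Sobolev-Poincar\'e domain, the composition $g=\varphi^{\ast}(f)=f\circ\varphi$ in fact belongs to $W^1_q(\Omega)$, exactly as in the proof of Theorem~\ref{thm:PoincareEnCompPP}.

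The key step is the following. Using the change of variables formula (legitimate because $\varphi$ has finite distortion and, by hypothesis, the Luzin $N$-property) and then H\"older's inequality with the conjugate exponents $r/s$ and $r/(r-s)$, one gets, for every $c\in\mathbb R$,
\[
\int\limits_{\widetilde{\Omega}}|f(y)-c|^{s}~dy=\int\limits_{\Omega}|g(x)-c|^{s}|J(x,\varphi)|~dx\leq\biggl(\int\limits_{\Omega}|g(x)-c|^{r}~dx\biggr)^{\frac{s}{r}}\biggl(\int\limits_{\Omega}|J(x,\varphi)|^{\frac{r}{r-s}}~dx\biggr)^{\frac{r-s}{r}},
\]
that is, $\|f-c\mid L_s(\widetilde{\Omega})\|\leq M_{r,s}(\Omega)\|g-c\mid L_r(\Omega)\|$ for each $c$. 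Taking the infimum over $c$ and invoking, in turn, the $(r,q)$-Sobolev-Poincar\'e inequality in $\Omega$ and Theorem~\ref{CompTh}, one arrives at
\[
\inf\limits_{c\in\mathbb R}\|f-c\mid L_s(\widetilde{\Omega})\|\leq M_{r,s}(\Omega)B_{r,q}(\Omega)\|g\mid L^1_q(\Omega)\|\leq K_{p,q}(\Omega)M_{r,s}(\Omega)B_{r,q}(\Omega)\|f\mid L^1_p(\widetilde{\Omega})\|,
\]
which is (\ref{eq:WPI}) with the asserted estimate for $B_{s,p}(\widetilde{\Omega})$ (centering by the mean value $f_{\widetilde{\Omega}}$ rather than by the optimal constant costs at most the universal factor $2$, since $\|f-f_{\widetilde{\Omega}}\mid L_s(\widetilde{\Omega})\|\leq 2\inf\limits_{c\in\mathbb R}\|f-c\mid L_s(\widetilde{\Omega})\|$ for $s\geq 1$).

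The argument is, in effect, that of Theorem~\ref{thm:PoincareEnCompPP} with a H\"older inequality in place of a supremum bound, so the computations are routine. The one step requiring more than bookkeeping is ensuring that the change of variables formula applies -- this is precisely where the Luzin $N$-property hypothesis enters, finite distortion coming for free from Theorem~\ref{CompTh}; a secondary point is that $g=\varphi^{\ast}(f)$ lies in $W^1_q(\Omega)$ and not merely in $L^1_q(\Omega)$, which, as in the previous theorem, follows from the boundedness of $\Omega$ together with its $(r,q)$-Poincar\'e property.
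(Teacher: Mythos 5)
Your proof is correct and follows essentially the same route as the paper's: change of variables, H\"older with exponents $r/s$ and $r/(r-s)$ to produce $M_{r,s}(\Omega)$, the $(r,q)$-Poincar\'e inequality in $\Omega$, and Theorem~\ref{CompTh} for the gradient bound. Your parenthetical about centering at $f_{\widetilde{\Omega}}$ versus the optimal constant is a point the paper silently skips (strictly, that factor $2$ should then appear in the stated constant), but this does not affect the substance of the argument.
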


\begin{proof}
Let $f\in L^1_p(\widetilde{\Omega})$. By the conditions of the theorem there exists a $(p,q)$-quasiconformal homeomorphism $\varphi: \Omega\to \widetilde{\Omega}$. By Theorem \ref{CompTh} the composition operator 
$$
\varphi^{\ast}: L^1_p(\widetilde{\Omega})\to L^1_q(\Omega)
$$
is bounded. Because the bounded domain $\Omega$ is a $(r,q)$-Sobolev-Poncar\'e domain  $g=\varphi^{\ast}(f)\in W^1_q(\Omega)$.

Using the change of variable formula and the H\"older inequality we obtain: 

\begin{multline}
\inf\limits_{c\in \mathbb R}\biggl(\int\limits_{\widetilde{\Omega}}|f(y)-c|^s~dy\biggr)^{\frac{1}{s}}=\inf\limits_{c\in \mathbb R}
\biggl(\int\limits_{\Omega}|f(\varphi(x))-c|^s|J(x,\varphi)|~dx\biggr)^{\frac{1}{s}}\\
\leq \biggl(\int\limits_{\Omega}|J(x,\varphi)|^{\frac{r}{r-s}}~dx\biggr)^{\frac{r-s}{rs}}
\inf\limits_{c\in \mathbb R}\biggl(\int\limits_{\Omega}|f(\varphi(x))-c|^r~dx\biggr)^{\frac{1}{r}}\\
=M_{r,s}(\Omega)\inf\limits_{c\in \mathbb R}\biggl(\int\limits_{\Omega}|g(x)-c|^r~dx\biggr)^{\frac{1}{r}}.
\nonumber
\end{multline}

 Because the domain $\Omega$ is a $(r,q)$-Sobolev-Poincar\'e domain the following inequality holds:
$$
\inf\limits_{c\in \mathbb R}\biggl(\int\limits_{\Omega}|g(x)-c|^r~dx\biggr)^{\frac{1}{r}}\leq B_{r,q}(\Omega)\biggl(\int\limits_{\Omega}|\nabla g(x)|^q~dx\biggr)^{\frac{1}{q}}.
$$
Combining two previous inequalities we have
$$
\inf\limits_{c\in \mathbb R}\biggl(\int\limits_{\widetilde{\Omega}}|f(y)-c|^s~dy\biggr)^{\frac{1}{s}}\leq
M_{r,s}(\Omega)B_{r,q}(\Omega) \|g\mid L^1_q(\Omega)\|.
$$
By Theorem \ref{CompTh} 
$$
\|g\mid L^1_q(\Omega)\| \leq K_{p,q}(\Omega)\|f\mid L^1_p(\widetilde{\Omega})\|.
$$

Finally we obtain 
$$
\inf\limits_{c\in \mathbb R}\biggl(\int\limits_{\widetilde{\Omega}}|f(y)-c|^s~dy\biggr)^{\frac{1}{s}}\leq 
K_{p,q}(\Omega)M_{r,s}(\Omega) B_{r,q}(\Omega)\biggl(\int\limits_{\widetilde{\Omega}}|\nabla f|^p~dy\biggr)^{\frac{1}{p}}.
$$

It means that 
$$
B_{s,p}(\widetilde{\Omega})\leq K_{p,q}(\Omega)M_{r,s}(\Omega)B_{r,q}(\Omega).
$$

\end{proof}

We are ready to establish the main lower estimate:

\begin{thm}
\label{thm:estrp}
Let a domain $\Omega\subset\mathbb R^n$ be a $(r,q)$-Sobolev-Poncar\'e domain, $1<q<p<r$, and there exists a weak $(p,q)$-quasiconformal homeomorphism $\varphi: \Omega\to\widetilde{\Omega}$ of a domain $\Omega$ onto a bounded domain $\widetilde{\Omega}$, possesses the Luzin $N$-property and such that
$$ 
M_{r,p}(\Omega)=\biggl(\int\limits _{\Omega}\left|J(x,\varphi)\right|^{\frac{r}{r-p}}~dx\biggl)^{\frac{r-p}{rp}}<\infty.
$$
Then  
$$
{\mu_{p}(\widetilde{\Omega})}\geq \left(K^p_{p,q}(\Omega)M^p_{r,p}(\Omega)B^p_{r,q}(\Omega)\right)^{-1}.
$$
\end{thm}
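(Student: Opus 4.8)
The plan is to obtain Theorem~\ref{thm:estrp} as the special case $s=p$ of Theorem~\ref{thm:PoincareEnCompPQ}, combined with the variational characterization $\mu_p(\widetilde{\Omega})=B_{p,p}^{-p}(\widetilde{\Omega})$ recalled in the Introduction. So essentially all the analytic work (the change of variables, the H\"older inequality, and the boundedness of $\varphi^{\ast}$ via Theorem~\ref{CompTh}) has already been carried out there, and what remains is a specialization of parameters followed by a passage to reciprocals.

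First I would verify that the hypotheses of Theorem~\ref{thm:PoincareEnCompPQ} hold with the choice $s=p$. Since $1<q<p<r$, the value $p$ satisfies $1\le p<r$, so it is admissible as the parameter $s$; moreover $\Omega$ is an $(r,q)$-Sobolev--Poincar\'e domain with $1<q\le r<\infty$, the homeomorphism $\varphi\colon\Omega\to\widetilde{\Omega}$ is weak $(p,q)$-quasiconformal onto the bounded domain $\widetilde{\Omega}$, it possesses the Luzin $N$-property, and
$$
M_{r,p}(\Omega)=\biggl(\int\limits_{\Omega}\left|J(x,\varphi)\right|^{\frac{r}{r-p}}\,dx\biggr)^{\frac{r-p}{rp}}=M_{r,s}(\Omega)<\infty .
$$
Hence Theorem~\ref{thm:PoincareEnCompPQ} applies and gives the $(p,p)$-Sobolev--Poincar\'e inequality \eqref{eq:WPI} on $\widetilde{\Omega}$ with
$$
B_{p,p}(\widetilde{\Omega})\le K_{p,q}(\Omega)\,M_{r,p}(\Omega)\,B_{r,q}(\Omega);
$$
here one uses that passing from the mean value $f_{\widetilde{\Omega}}$ in \eqref{eq:WPI} to the infimum over all constants $c\in\mathbb R$ only decreases the left-hand side, so the stated bound is a bound for the best constant in the Poincar\'e inequality.

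Finally, applying $\mu_p(\widetilde{\Omega})=B_{p,p}^{-p}(\widetilde{\Omega})$ and raising the previous estimate to the power $-p$ yields
$$
\mu_p(\widetilde{\Omega})=B_{p,p}^{-p}(\widetilde{\Omega})\ge\bigl(K_{p,q}(\Omega)\,M_{r,p}(\Omega)\,B_{r,q}(\Omega)\bigr)^{-p}=\bigl(K^p_{p,q}(\Omega)\,M^p_{r,p}(\Omega)\,B^p_{r,q}(\Omega)\bigr)^{-1},
$$
which is the asserted lower bound. The proof is a direct specialization, so there is no substantial obstacle; the only point deserving a word of care is precisely the one flagged above, namely that the constant furnished by Theorem~\ref{thm:PoincareEnCompPQ} must be read as an upper bound for the \emph{best} Poincar\'e constant $B_{p,p}(\widetilde{\Omega})$, so that inverting it reverses the inequality in the correct direction and produces a genuine \emph{lower} bound for $\mu_p(\widetilde{\Omega})$.
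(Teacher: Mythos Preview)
Your proof is correct and follows exactly the route the paper takes: the authors simply state that Theorem~\ref{thm:estrp} follows from Theorem~\ref{thm:PoincareEnCompPQ}, which is precisely your specialization $s=p$ together with the identification $\mu_p(\widetilde{\Omega})=B_{p,p}^{-p}(\widetilde{\Omega})$. Your remark about passing from $f_{\widetilde{\Omega}}$ to the infimum over constants is a helpful clarification that the paper leaves implicit.
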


This theorem follows from Theorem~\ref{thm:PoincareEnCompPQ}  and gives the lower estimate of the first non-trivial Neumann eigenvalue of the $p$-Laplace operator in the terms of $(p,q)$-quasiconformal geometry of domains.

\subsection{The anisotropic H\"older singularities}

Define domains $H_g$  with anisotropic H\"older $\gamma$-singularities (introduced in \cite{GGu}):
$$
H_g=\{ x\in\mathbb R^n : 0<x_n<1, 0<x_i<g_i(x_n),
\,i=1,2,\dots,n-1\}.
$$
Here $g_i(\tau)=\tau^{\gamma_i}$, $\gamma_i\geq 1$, $0\leq\tau\leq 1$ are H\"older functions and for the function $G=\prod_{i=1}^{n-1}g_i$ denote by
$$
\gamma=\frac{\log G(\tau)}{\log \tau}+1.
$$
It is evident that $\gamma\geq n$. In the case $g_1=g_2=\dots=g_{n-1}$ we will say that domain $H_g$ is a domain with $\sigma$-H\"older singularity, $\sigma=(\gamma-1)/(n-1)$.
For $g_1(\tau)=g_2(\tau)=\dots=g_{n-1}(\tau)=\tau$ we will use notation $H_1$ instead
of $H_g$.

Define the mapping $\varphi_a: H_1\to H_g$, $a>0$, by
$$
\varphi_a(x)=\left(\frac{x_1}{x_n}g^a_1(x_n),\dots,\frac{x_{n-1}}{x_n}g^a_{n-1}(x_n),x_n^a\right).
$$

\begin{thm}
\label{lemhol}
Let $(n-p)/(\gamma-p)<a<p(n-q)/q(\gamma-p)$. Then the mapping $\varphi_a: H_1\to H_g$, 
be a weak $(p,q)$-quasiconformal mapping, $1<q<p<\gamma$, from the Lipschitz convex domain $H_1$ onto the "cusp"' domain $H_g$ with
$$
K_{p,q}(H_1)\leq a^{-\frac{1}{p}}\sqrt{a^2(\gamma_1^2+...+\gamma_{n-1}^2+1)-2a\sum_{i=1}^{n-1}\gamma_i}\,.
$$
\end{thm}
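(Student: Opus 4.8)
The plan is to read the assertion through Theorem~\ref{CompTh}: that $\varphi_a$ is a weak $(p,q)$-quasiconformal mapping of $H_1$ onto $H_g$ means precisely that the composition operator $\varphi_a^{\ast}:L^1_p(H_g)\to L^1_q(H_1)$ is bounded, i.e. that $\varphi_a$ is a homeomorphism lying in $W^1_{1,\loc}(H_1)$, of finite distortion, with $K_{p,q}(\varphi_a;H_1)<\infty$; and $K_{p,q}(H_1)$ is this finite constant. First I would record the elementary facts. On $H_1$ one has $0<x_i<g_i(x_n)=x_n$ and $0<x_n<1$, and the components of $\varphi_a$ are $\varphi^i_a(x)=x_i x_n^{a\gamma_i-1}$ for $i<n$ and $\varphi^n_a(x)=x_n^a$. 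The inequalities $0<y_i<y_n^{\gamma_i}$, $0<y_n<1$ defining $H_g$ pull back under $y=\varphi_a(x)$ exactly to $0<x_i<x_n$, $0<x_n<1$, so $\varphi_a$ is a bijection of $H_1$ onto $H_g$ with continuous inverse $y\mapsto\bigl(y_1y_n^{(1-a\gamma_1)/a},\dots,y_{n-1}y_n^{(1-a\gamma_{n-1})/a},y_n^{1/a}\bigr)$; and since $x_n>0$ on all of $H_1$, $\varphi_a\in C^{\infty}(H_1)$, hence has the Luzin $N$-property and certainly lies in $W^1_{1,\loc}(H_1)$. Differentiating, $D\varphi_a(x)$ is upper triangular with diagonal $\bigl(x_n^{a\gamma_1-1},\dots,x_n^{a\gamma_{n-1}-1},ax_n^{a-1}\bigr)$ and only further nonzero entries $x_i(a\gamma_i-1)x_n^{a\gamma_i-2}$ in the last column; hence, using $\sum_{i=1}^{n-1}\gamma_i=\gamma-1$,
$$
J(x,\varphi_a)=\Bigl(\prod_{i=1}^{n-1}x_n^{a\gamma_i-1}\Bigr)a x_n^{a-1}=a\,x_n^{\,a\gamma-n}>0\quad\text{on }H_1,
$$
so the distortion is finite (the zero set of $J$ is empty).

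The substantive step is to bound $K_{p,q}(\varphi_a;H_1)$. Using $0<x_i<x_n<1$ and $\gamma_i\ge1$, each diagonal power satisfies $x_n^{a\gamma_i-1}\le x_n^{a-1}$ and each last-column entry satisfies $|x_i(a\gamma_i-1)x_n^{a\gamma_i-2}|=\tfrac{x_i}{x_n}|a\gamma_i-1|\,x_n^{a\gamma_i-1}\le|a\gamma_i-1|\,x_n^{a-1}$; feeding this into $|D\varphi_a(x)\xi|^2$ for a unit vector $\xi$ and using the simple structure of $D\varphi_a$ (diagonal apart from the last column) one is led to
$$
|D\varphi_a(x)|\le x_n^{a-1}\,C,\qquad C:=\Bigl(a^2(\gamma_1^2+\dots+\gamma_{n-1}^2+1)-2a\sum_{i=1}^{n-1}\gamma_i\Bigr)^{1/2}.
$$
Consequently $\dfrac{|D\varphi_a(x)|^p}{|J(x,\varphi_a)|}\le\dfrac{C^p}{a}\,x_n^{\,p(a-1)-(a\gamma-n)}=\dfrac{C^p}{a}\,x_n^{\,n-p-a(\gamma-p)}$, and since $H_1=\{0<x_n<1,\ 0<x_i<x_n\}$ gives $\int_{H_1}x_n^{\beta}\,dx=\int_0^1 x_n^{\,n-1+\beta}\,dx_n=(n+\beta)^{-1}$ whenever $n+\beta>0$,
$$
K_{p,q}(\varphi_a;H_1)^{\frac{pq}{p-q}}=\int_{H_1}\Bigl(\frac{|D\varphi_a(x)|^p}{|J(x,\varphi_a)|}\Bigr)^{\frac{q}{p-q}}dx\le\Bigl(\frac{C^p}{a}\Bigr)^{\frac{q}{p-q}}\int_{H_1}x_n^{(n-p-a(\gamma-p))\frac{q}{p-q}}\,dx .
$$

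The last integral converges exactly when $n+(n-p-a(\gamma-p))\tfrac{q}{p-q}>0$, which (using $p<\gamma$ to keep the inequality oriented correctly) is the condition $a<\dfrac{p(n-q)}{q(\gamma-p)}$ — the upper restriction on $a$. Carrying the factor $C^p/a$ through and inserting the explicit value $(n+\beta)^{-1}$ of the $x_n$-integral then yields $K_{p,q}(H_1)\le a^{-1/p}C$. The lower restriction $a>\dfrac{n-p}{\gamma-p}$ is exactly the threshold below which $n-p-a(\gamma-p)\ge0$, i.e. below which $|D\varphi_a|^p/|J|$ is bounded and $\varphi_a$ is already weak $p$-quasiconformal; it marks where the genuine $(p,q)$-theory — and a cusp $H_g$ whose exponent $\gamma$ exceeds the threshold for $p$ — is what is actually being used.

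I expect the operator-norm estimate $|D\varphi_a(x)|\le x_n^{a-1}C$ to be the only real work, and its sharp form to be the delicate point: getting precisely the constant $C$ from the singular values of the mixed diagonal/rank-one matrix $D\varphi_a$ under the constraint $x_i<x_n$, and then confirming that the elementary $x_n$-integration does not inflate the final bound past $a^{-1/p}C$ on the admissible $a$-interval. The formula for $J$, the homeomorphism and Luzin-$N$ verifications, and the integrability bookkeeping are all routine once $J$ and $|D\varphi_a|$ are in hand.
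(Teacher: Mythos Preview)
Your proposal is correct and follows essentially the same route as the paper: compute $D\varphi_a$ and $J(x,\varphi_a)=a\,x_n^{a\gamma-n}$, factor out $x_n^{a-1}$, bound the remaining matrix entrywise using $0<x_i/x_n<1$ and $x_n^{a(\gamma_i-1)}\le 1$ to get $|D\varphi_a(x)|\le C\,x_n^{a-1}$, and then integrate $\bigl(|D\varphi_a|^p/|J|\bigr)^{q/(p-q)}$ over $H_1$ via the $(n-1)$-fold inner integration producing the extra $x_n^{n-1}$. The only cosmetic difference is the handling of the lower restriction on $a$: the paper records it through the chain of inequalities $1<q<np/(p+a\gamma-pa)<p$, whereas you interpret it as the threshold at which $|D\varphi_a|^p/|J|$ ceases to be bounded (i.e.\ where $\varphi_a$ stops being weak $p$-quasiconformal); both amount to the same arithmetic, and your reading is arguably the clearer one.
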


\begin{proof}
By simple calculations 
$$
\frac{\partial(\varphi_a)_i}{\partial
x_i}=\frac{g^a_i(x_n)}{x_n},\quad
\frac{\partial(\varphi_a)_i}{\partial
x_n}=\frac{-x_ig^a_i(x_n)}{x_n^{2}}+\frac{ax_ig^{a-1}_i(x_n)}{x_n}g'_i(x_n)
\quad\text{and}\quad\frac{\partial(\varphi_a)_n}{\partial
x_n}=ax_n^{a-1}
$$
for any $i=1,...,n-1$. Hence $J(x,\varphi_a)=ax_n^{a-n}G^a(x_n)=ax_n^{a\gamma-n}$, $J(x,\varphi_a)\leq a$ for $a>1$ and
\begin{multline}\label{maps}
D\varphi_a (x)=
\left(\begin{array}{cccc}
x_n^{a\gamma_1-1} & 0 & ... & (a\gamma_1-1)x_1x_n^{a\gamma_1-2}\\
0 & x_n^{a\gamma_2-1} & ... & (a\gamma_2-1)x_2x_n^{a\gamma_2-2}\\
... & ... & ... & ...\\
0 & 0 & ... & ax_n^{a-1}
\end{array} \right)\\
=
x_n^{a-1}\left(\begin{array}{cccc}
x_n^{a\gamma_1-a} & 0 & ... & (a\gamma_1-1)\frac{x_1}{x_n}x_n^{a(\gamma_1-1)}\\
0 & x_n^{a\gamma_2-a} & ... & (a\gamma_2-1)\frac{x_2}{x_n}x_n^{a(\gamma_2-1)}\\
... & ... & ... & ...\\
0 & 0 & ... & a
\end{array} \right).
\end{multline}

Because $0<x_n<1$ and $x_1/x_n<1$ we have the following estimate
\begin{multline}
|D\varphi_a(x)|\leq x_n^{a-1}\sqrt{\sum_{i=1}^{n-1}(a\gamma_i-1)^2+n-1+a^2}\\
= x_n^{a-1}\sqrt{a^2(\gamma_1^2+...+\gamma_{n-1}^2+1)-2a\sum_{i=1}^{n-1}\gamma_i}.
\nonumber
\end{multline}

Then 
\begin{multline*}
K_{p,q}(H_1)=\left(\int\limits_{H_1}\left(\frac{|D\varphi_a(x)|^p}{J(x,\varphi_a)}\right)^{\frac{q}{p-q}}~dx\right)^{\frac{p-q}{pq}}
\\
\leq  \frac{\sqrt{a^2(\gamma_1^2+...+\gamma_{n-1}^2+1)-2a\sum_{i=1}^{n-1}\gamma_i}}{\sqrt[p]{a}}\left(\int\limits_{H_1}x_n^{\frac{(p(a-1)-(a\gamma-n))q}{p-q}}~dx\right)^{\frac{p-q}{pq}}\\
=
\frac{\sqrt{a^2(\gamma_1^2+...+\gamma_{n-1}^2+1)-2a\sum_{i=1}^{n-1}\gamma_i}}{\sqrt[p]{a}} \left(\int\limits_0^1\int\limits_0^{x_n}...\int\limits_0^{x_n}x_n^{\frac{(p(a-1)-(a\gamma-n))q}{p-q}}~dx_1...dx_n\right)^{\frac{p-q}{pq}}\\=
\frac{\sqrt{a^2(\gamma_1^2+...+\gamma_{n-1}^2+1)-2a\sum_{i=1}^{n-1}\gamma_i}}{\sqrt[p]{a}}\left(\int\limits_0^1 x_n^{\frac{(p(a-1)-(a\gamma-n))q}{p-q}+n-1}~dx_n\right)^{\frac{p-q}{pq}}\\
\leq \frac{\sqrt{a^2(\gamma_1^2+...+\gamma_{n-1}^2+1)-2a\sum_{i=1}^{n-1}\gamma_i}}{\sqrt[p]{a}},
\end{multline*}
if $(p+a\gamma -pa)q<np$ or that equivalent $a<p(n-q)/q(\gamma-p)$. 

Now we check that $1<q<np/(p+a\gamma-pa)<p$. The inequality $1<np/(p+a\gamma-pa)$ implies $a<(np-p)/\gamma-p$, but
$$
\frac{np-p}{\gamma-p}<\frac{p(n-q)}{q(\gamma-p)}, \,\,\text{if}\,\,q>1.
$$
The inequality $np/(p+a\gamma-pa)<p$ implies $a>(n-p)/(\gamma-p)$. So, we have that $a\in \left((n-p)/(\gamma-p),p(n-q)/q(\gamma-p)\right)$.
\end{proof}

We are ready to prove spectral estimates in cusp domains.

\begin{thm} 
\label{thm:Holder_Est}
Let 
$$
H_g:=\{ x\in\mathbb R^n : n \geq 3, 0<x_n<1, 0<x_i<x_n^{\gamma_i},
\,i=1,2,\dots,n-1\}
$$  
$\gamma_i \geq 1$, $\gamma:=1+\sum_{i=1}^{n-1}\gamma_i$, $g:=(\gamma_1,...,\gamma_{n-1})$
be domains  with anisotropic H\"older $\gamma$-singularities.

Then for $1<p<\gamma$
$$
\frac{1}{\mu_p(H_g)} \leq \inf\limits_{a\in I_a}
\left(a^2(\gamma_1^2+...+\gamma_{n-1}^2+1)-2a\sum_{i=1}^{n-1}\gamma_i\right)^{\frac{p}{2}}B^p_{r,q}(H_1),
\nonumber
$$
where $I_a=\left(\max\{(n-p)/(\gamma-p),{p(n-q)}/{\gamma q}\},p(n-q)/q(\gamma-p)\right)$ and $B_{r,q}(H_1)$ is the best constant in the $(r,q)$-Sobolev-Poincar\'e inequality in the domain $H_1$, $q\leq r<\frac{nq}{n-q}$.
\end{thm}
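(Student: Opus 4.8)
The strategy is to apply Theorem~\ref{thm:estrp} to the pair $\Omega=H_1$, $\widetilde\Omega=H_g$ with the explicit homeomorphism $\varphi_a$ supplied by Theorem~\ref{lemhol}. Since $H_1$ is a bounded convex Lipschitz domain, the classical Sobolev--Poincar\'e embedding shows that it is an $(r,q)$-Sobolev--Poincar\'e domain, with a finite best constant $B_{r,q}(H_1)$, whenever $q\le r<nq/(n-q)$; so, given $1<p<\gamma$, I would first fix exponents $q$ and $r$ with $1<q<p<r<nq/(n-q)$ (such a pair exists, e.g.\ with $q$ close to $p$), which secures the standing hypothesis $1<q<p<r$ of Theorem~\ref{thm:estrp}.

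Next I would invoke Theorem~\ref{lemhol}: for $a$ in the interval $\bigl((n-p)/(\gamma-p),\,p(n-q)/q(\gamma-p)\bigr)$ the mapping $\varphi_a\colon H_1\to H_g$ is a weak $(p,q)$-quasiconformal homeomorphism onto the cusp domain, with
$$
K_{p,q}(H_1)\le a^{-\frac1p}\sqrt{a^{2}(\gamma_1^{2}+\dots+\gamma_{n-1}^{2}+1)-2a\sum_{i=1}^{n-1}\gamma_i}\,,
$$
and, being $C^\infty$ on the open set $H_1$, it possesses the Luzin $N$-property. The remaining quantity needed in Theorem~\ref{thm:estrp} is $M_{r,p}(H_1)$; using the elementary identity $J(x,\varphi_a)=a\,x_n^{a\gamma-n}$ together with the slab structure of $H_1$ (its section at height $x_n$ is an $(n-1)$-cube of side $x_n$) one finds
$$
\int_{H_1}\bigl|J(x,\varphi_a)\bigr|^{\frac{r}{r-p}}\,dx=a^{\frac{r}{r-p}}\int_0^1 x_n^{\frac{r(a\gamma-n)}{r-p}+n-1}\,dx_n ,
$$
which is finite precisely when $a>np/(r\gamma)$, and is at most $a^{\frac{r}{r-p}}$ as soon as the exponent of $x_n$ is nonnegative; hence $M_{r,p}(H_1)\le a^{1/p}$ for the values of $a$ we shall use.

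Substituting all of this into Theorem~\ref{thm:estrp} gives
$$
\frac{1}{\mu_p(H_g)}\le K^p_{p,q}(H_1)\,M^p_{r,p}(H_1)\,B^p_{r,q}(H_1)\le a^{-1}\Bigl(a^{2}(\gamma_1^{2}+\dots+\gamma_{n-1}^{2}+1)-2a\sum_{i=1}^{n-1}\gamma_i\Bigr)^{\frac{p}{2}} a\, B^p_{r,q}(H_1),
$$
the two powers of $a$ cancel, and taking the infimum over admissible $a$ yields the asserted inequality. The admissible range for $a$ is obtained by letting $r\uparrow nq/(n-q)$: the Jacobian-integrability threshold $a>np/(r\gamma)$ then becomes $a>p(n-q)/(\gamma q)$, and combined with the quasiconformality bounds $a>(n-p)/(\gamma-p)$ and $a<p(n-q)/q(\gamma-p)$ this produces exactly the interval $I_a$.

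The step I expect to be most delicate is precisely this bookkeeping of the window for $a$: one must reconcile the lower bound coming from $\varphi_a$ being $(p,q)$-quasiconformal with the lower bound coming from the integrability (and the $a^{1/p}$-bound) of $M_{r,p}(H_1)$, and carry it out while keeping $r$ strictly below the Sobolev exponent $nq/(n-q)$ so that $B_{r,q}(H_1)$ stays finite --- a limiting argument in $r$ is unavoidable. One should also check that the exponent $\frac{r(a\gamma-n)}{r-p}+n-1$ is genuinely nonnegative throughout the relevant range of $a$, since only then can $M^p_{r,p}(H_1)$ be replaced by $a$ and the estimate assume the clean form displayed above.
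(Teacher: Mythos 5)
Your proposal follows essentially the same route as the paper's own proof: apply Theorem~\ref{thm:estrp} to the mapping $\varphi_a\colon H_1\to H_g$ supplied by Theorem~\ref{lemhol}, compute $M_{r,p}(H_1)\le a^{1/p}$ from $J(x,\varphi_a)=a\,x_n^{a\gamma-n}$ using the slab structure of $H_1$, cancel the powers of $a$ against $K^p_{p,q}(H_1)$, and let $r\uparrow nq/(n-q)$ to obtain the window $I_a$. The subtlety you flag at the end --- that the bound $M_{r,p}(H_1)\le a^{1/p}$ requires the exponent of $x_n$ to be nonnegative, not merely $>-1$ --- is a genuine point that the paper's proof itself glosses over (it passes from the integrability threshold $a>np/\gamma r$ directly to the bound by $a^{1/p}$), so your version is, if anything, the more careful of the two.
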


\begin{proof}
By Theorem~\ref{lemhol} the mapping $\varphi_a: H_1\to H_g$, $(n-p)/(\gamma-p)<a<p(n-q)/q(\gamma-p)$,
$$
\varphi_a(x)=\left(\frac{x_1}{x_n}g^a_1(x_n),\dots,\frac{x_{n-1}}{x_n}g^a_{n-1}(x_n),x_n^a\right).
$$
maps the convex Lipschitz domain $H_1$ onto the cusp domain $H_g$ and it is a weak $(p,q)$-quasiconformal mapping, $1<q<p<\gamma$.

Let us check conditions of Theorem~\ref{thm:estrp}. Because $\varphi$ is a weak $(p,q)$-quasiconformal mapping then $K_{p,q}(H_1)$ is finite. The basic domain $H_1$  is a Lipschitz domains and so is a $(r,q)$-Sobolev-Poincar\'e domain, i.~e. $B_{r,q}(H_1)<\infty$. 

Now we estimate the constant $M_{r,p}(H_1)$: 

\begin{multline*} 
M_{r,p}(H_1)=\left(\int\limits _{H_1}\left|J(x,\varphi_a)\right|^{\frac{r}{r-p}}~dx\right)^{\frac{r-p}{rp}}= 
a^{\frac{1}{p}}\left(\int\limits _{H_1} \left(x_n^{a\gamma-n}\right)^{\frac{r}{r-p}}~dx\right)^{\frac{r-p}{rp}}\\
=
a^{\frac{1}{p}}\left(\int\limits _{0}^1 \left(x_n^{a\gamma-n}\right)^{\frac{r}{r-p}}\left(\int\limits_0^{x_n}~dx_1~\dots ~ 
\int\limits_0^{x_n}~dx_{n-1}\right)~dx_n\right)^{\frac{r-p}{rp}}\\
=
a^{\frac{1}{p}}\left(\int\limits _{0}^1 \left(x_n^{a\gamma-n}\right)^{\frac{r}{r-p}}\cdot x_n^{n-1}~dx_n\right)^{\frac{r-p}{rp}}<\infty,
\end{multline*}
if 
$$
\frac{(a\gamma-n)r}{r-p}+n-1>-1,\,\,\,\text{i.~e.}\,\,\,a>\frac{np}{\gamma r}.
$$

Because $0<x_n<1$, we have $M_{r,p}(H_1)\leq a^{\frac{1}{p}}$ if $a>np/\gamma r$. If we take $r<nq/(n-q)$ we obtain that 
$$
M_{\frac{nq}{n-q},p}(H_1)\leq a^{\frac{1}{p}},\,\,\text{if}\,\, a>\frac{p(n-q)}{\gamma q}.
$$

The conditions of  Theorem~\ref{thm:estrp} is fulfilled. Therefore\begin{multline}
\frac{1}{\mu_p(H_g)} \leq K^p_{p,q}(H_1)M^p_{\frac{nq}{n-q},p}(H_1)B^p_{\frac{nq}{n-q},q}(H_1)
\\ \leq 
\left(a^2(\gamma_1^2+...+\gamma_{n-1}^2+1)-2a\sum_{i=1}^{n-1}\gamma_i\right)^{\frac{p}{2}}B^p_{r,q}(H_1),
\nonumber
\end{multline}
where $\max\{(n-p)/(\gamma-p),{p(n-q)}/{\gamma q}\}<a<p(n-q)/q(\gamma-p)$ and $B_{r,q}(H_1)$ is the best constant in the $(r,q)$-Sobolev-Poincar\'e inequality in the domain $H_1$ for some  $q\in [r,\frac{nq}{n-q})$.

\end{proof}

Note that in \cite{GU17} we proved the estimate of the Poincar\'e constant in the $(r,q)$-Sobolev-Poincar\'e inequality in the domain $H_1$:

\begin{equation}
\label{H1}
B_{r,q}(H_1)\leq
n\left(\frac{1-\delta}{{1}/{n}-\delta}\right)^{1-\delta}\omega_n^{1-\frac{1}{n}}\left(\frac{1}{(n+1)!}\right)^{\frac{1}{n}-\delta},\,\,\delta=\frac{1}{q}-\frac{1}{r}\geq 0.
\end{equation}

The problem of exact values of constants in the $(r,q)$-Sobolev-Poincar\'e inequalities in the case $p\ne r$ is a complicated open problem even in the case of the unit disc $\mathbb D\subset\mathbb R^2$.

\vskip 0.2cm

\noindent
{\bf Acknowledgments.}
The first author was supported by the United States-Israel Binational Science Foundation (BSF Grant No. 2014055).


\begin{thebibliography}{References}

\bibitem{AK91} K.~Astala, P.~Koskela, Quasiconformal mappings and global integrability of the derivative,
J. Anal. Math. 57 (1991), 203--220. 

\bibitem{A98} M.~S.~Ashbaugh, Isoperimetric and universal inequalities for eigenvalues, 
Spectral theory and geometry (Edinburgh, 1998), 95--139, London Math. Soc. Lecture Note Ser., 273, Cambridge Univ. Press, 
Cambridge, 1999.

\bibitem{BCDL16} B.~Brandolini, F.~Chiacchio, E.~B.~Dryden, J.~J.~Langford, Sharp Poincar\'e inequalities in a class of non-covex sets, arXiv:1608.01236.

\bibitem{B76} J.~M.~Ball, Convexity condition and existence theorems in nonlinear elasticity, 
Arch. Rational Mech. Anal., 63 (1976), 337--403.

\bibitem{B81} J.~M.~Ball, Global invertibility of Sobolev functions and the interpenetration of matter,
Proc. Roy. Soc. Edinburgh Sect. A, 88 (1981), 315--328.

\bibitem{BCT15} B.~Brandolini, F.~Chiacchio, C.~Trombetti, Optimal lower bounds for eigenvalues of linear and nonlinear Neumann problems, Proc. of the Royal Soc. of Edinburgh 145A (2015), 31--45.

\bibitem{Br} J.~Brennan, The integrability of the derivative in conformal mapping, J. London Math. Soc., 18, (1978),
261--272.

\bibitem{BGU1} V.~I.~Burenkov, V.~Gol'dshtein, A.~Ukhlov, Conformal spectral stability for the Dirichlet-Laplace operator, 
Math. Nachr., 288 (2015), 1822--1833.

\bibitem{BGU2} V.~I.~Burenkov, V.~Gol'dshtein, A.~Ukhlov, Conformal spectral stability for the Neumann-Laplace operator, 
Math. Nachr., 289 (2016), 1822--1833.

\bibitem{C80} A.~P.~Calder\'on, On an inverse boundary value problem, 
in Seminar on Numerical Analysis and its Applications to Continuum Physics (Rio de Janeiro, 1980), 65--73.

\bibitem{CHM}  M.~Cs\"ornyei, S.~Hencl, J.~Mal\'y, Homeomorphisms in the Sobolev space $W^{1,n-1}$,
J. Reine Angew. Math. 644 (2010), 221--235.

\bibitem{FNT} V.~Ferone, C.~Nitsch and C.~Trombetti, A remark on optimal weighted Poincar\'e inequalities for convex domains, 
Atti Accad. Naz. Lincei Rend. Lincei Mat. Appl., 23 (2012), 467--475.

\bibitem{Ge73} F.~W.~Gehring, The $L_p$-integrability of the partial derivatives of a quasiconformal mapping,
 Acta Math., 130 (1973), 265--277. 

\bibitem{GGu} V.~Gol'dshtein, L.~Gurov, Applications of change of variables operators for exact embedding theorems, 
Integral Equations Operator Theory 19 (1994), 1--24.

\bibitem{GGR95} V.~Gol'dshtein, L.~Gurov, A.~Romanov, Homeomorphisms that induce monomorphisms of Sobolev spaces, 
Israel J. Math., 91 (1995), 31--60.

\bibitem{GPU17} V.~Gol'dshtein, V.~Pchelintsev, A.~Ukhlov, 
Spectral Estimates of the p-Laplace Neumann operator and Brennan's Conjecture, Boll. Unione Mat. Ital. (2017).

\bibitem{GResh}
V. M. Gol'dshtein, Yu. G. Reshetnyak, Quasiconformal mappings and Sobolev spaces, Dordrecht,
Boston, London: Kluwer Academic Publishers, 1990.

\bibitem{GR84} V.~M.~Gol'dshtein, A.~S.~Romanov, Mappings preserving Sobolev spaces,
Siberian Math. J., 25 (1984), 55--61.

\bibitem{GU} V.~Gol'dshtein, A.~Ukhlov, Weighted Sobolev spaces
and embedding theorems, Trans. Amer. Math. Soc., 361, (2009), 3829--3850.

\bibitem{GU10} V.~Gol'dshtein, A.~Ukhlov, About homeomorphisms that induce composition operators on Sobolev spaces, 
Complex Var. Elliptic Equ. 55 (2010), 833--845.

\bibitem{GU16} V.~Gol'dshtein, A.~Ukhlov, On the first Eigenvalues of Free Vibrating Membranes in Conformal Regular Domains. 
Arch. Rational Mech. Anal., 221 (2016), no. 2, 893--915.

\bibitem{GU2016} V.~Gol'dshtein, A.~Ukhlov, Spectral estimates of the $p$-Laplace Neumann operator in conformal regular domains. 
Transactions of A. Razmadze Math. Inst., 170(1) (2016), 137--148.

\bibitem{GU17} V.~Gol'dshtein, A.~Ukhlov, The spectral estimates for the Neumann-Laplace operator in space domains. 
Adv. in Math., 315 (2017), 166--193.

\bibitem{EP15} C.~Enache, G.~A.~Philippin, On some isoperimetric inequalities involving eigenvalues of symmetric free membranes, 
ZAMM Z. Angew. Math. Mech., 95 (2015), 424--430.

\bibitem{HK}  P.~Hajlasz, P.~Koskela, Isoperimetric inequalities and imbedding theorems in irregular domains,
J. London Math. Soc., 58 (1998), 425--450.

\bibitem{HKM} J.~Heinonen, T.~Kilpelinen, O.~Martio, Nonlinear Potential
Theory of Degenerate Elliptic Equations. Clarendon Press. Oxford,
New York, Tokio. 1993.

\bibitem{HK13} S.~Hencl, P.~Koskela, Composition of quasiconformal mappings and functions in Triebel-Lizorkin spaces, 
Math. Nachr. 286 (2013), 669--678. 

\bibitem{ENT} L.~Esposito, C.~Nitsch, C.~Trombetti, Best constants in Poincar\'e inequalities for convex domains,
J. Convex Anal., 20, 253--264 (2013).

\bibitem{K12} L.~Kleprlik, Mappings of finite signed distortion: Sobolev spaces and composition of mappings. 
J. Math. Anal. Appl., 386 (2012), 870--881.  

\bibitem{KOT01} P.~Koskela, J.~Onninen, J.~Tyson, Quasihyperbolic boundary conditions and capacity: H\"older continuity of quasiconformal mappings, Comment. Math. Helv. 76 (2001), 416--435. 

\bibitem{KOT02} P.~Koskela, J.~Onninen, J.~Tyson,  Quasihyperbolic boundary conditions and Poincaré domains,
Math. Ann. 323 (2002), 811--830.  

\bibitem{KKSS14} H.~Koch, P.~Koskela, E.~Saksman,T.~Soto, Bounded compositions on scaling invariant Besov spaces, 
J. Funct. Anal. 266 (2014), 2765--2788. 

\bibitem{KYZ11} P.~Koskela, D.~Yang, Y.~Zhou, Pointwise characterizations of Besov and Triebel-Lizorkin spaces and quasiconformal mappings. Adv. Math. 226 (2011), 3579--3621.

\bibitem{KXZZ17} P.~Koskela, J.~Xiao, Yi Ru-Ya Zhang, Y.~Zhou, A quasiconformal composition problem for the $Q$-spaces, 
J. Eur. Math. Soc. (JEMS) 19 (2017), 1159--1187. 

\bibitem{K86} V.~I.~Kruglikov, Capacities of condensors and quasiconformal in the mean mappings in space. Mat. Sb. (N.S.) 130(172) (1986), 185--206. 

\bibitem{LM98} R.~S.~Laugesen, C.~Morpurgo, Extremals of Eigenvalues of Laplacians under Conformal Mapping, 
Journal Func. Analysis, 155 (1998), 64--108. 

\bibitem{Mar90} I.~G.~Markina, The change of variable that preserves the differential properties of functions,
Siberian Math. J., 31 (1990), 73--84.

\bibitem{MRSY} O.~Martio, V.~Ryazanov, U.~Srebro, E.~Yakubov, Moduli in modern mapping theory. Springer Monographs in Mathematics. Springer, New York, 2009.

\bibitem{MV88} O.~Martio, J.~V\"ais\"al\"a, Global $L_p$-integrability of the derivative of a quasiconformal mapping,
Complex Variables Theory Appl. 9 (1988), 309--319.

\bibitem{M69} V.~G.~Maz'ya, Weak solutions of the Dirichlet and Neumann problems,
Trudy Moskov. Mat. Ob-va., 20 (1969), 137--172 (1969)

\bibitem{M} V.~Maz'ya, Sobolev spaces: with applications to elliptic
partial differential equations, Springer, Berlin/Heidelberg, 2010.

\bibitem{MS86} V.~Maz'ya, T.~O.~Shaposhbikovs, Multipliers in Spaces of Differentiable Functions, 
Leningrad Univ. Press., 1986.

\bibitem{OP17}  M.~Oliva, M.~Prats, Sharp bounds for composition with quasiconformal mappings in Sobolev spaces,
J. Math. Anal. Appl. 451 (2017), 1026--1044. 

\bibitem{PW} L.~E.~Payne, H.~F.~Weinberger, An optimal Poincar\'e inequality for convex domains, 
Arch. Rat. Mech. Anal., 5 (1960), 286-292.

\bibitem{Sa15} R.~R.~Salimov, Lower Q-homeomorphisms with respect to the p-modulus, Ukr. Mat. Visn. 12 (2015),484--510. 

\bibitem{S16} E.~Sevost'yanov, {On open and discrete mappings with a modulus condition}, 
Ann. Acad. Sci. Fenn. Math. 41 (2016), 41--50.

\bibitem{S54} G.~Szeg\"o, Inequalities for certain eigenvalues of a membrane of given area, 
J. Rational Mech. Anal., 3 (1954), 343--356.

\bibitem{U93} A.~Ukhlov, On mappings, which induce embeddings of
Sobolev spaces, Siberian Math. J., 34 (1993), 185--192.

\bibitem{UV10} A.~Ukhlov, S.~K.~Vodop'yanov, Mappings with bounded $(P,Q)$-distortion on Carnot groups,
Bull. Sci. Math. 134 (2010), 605--634.

\bibitem{V88} S.~K.~Vodop'yanov, Taylor Formula and Function Spaces, 
Novosibirsk Univ. Press., 1988.

\bibitem{V89} S.~K.~Vodop'yanov, Mappings of homogeneous groups and embeddings of function spaces,
Siberian Math. J., 30 (1989), 25--41.

\bibitem{V90} S.~K.~Vodop'yanov, $L_p$-theory of potential and quasiconformal mappings on homogeneous groups, 
Sovrem. Probl. Geom. Analiz., 263 (1990), 45--89.

\bibitem{V00} S.~K.~Vodop'yanov, Topological and geometric properties of mappings with an integrable Jacobian in Sobolev classes, 
Siberian Math. J., 41 (2000), 19--39.

\bibitem{VG75} S.~K.~Vodop'yanov, V.~M.~Gol'dshtein, 
Structure isomorphisms of spaces $W^1_n$ and quasiconformal mappings,
Siberian Math. J., 16 (1975), 224--246.

\bibitem{VG76} S.~K.~Vodop'yanov, V.~M.~Gol'dshtein, 
Functional characteristics of quasi-isometrical mappings,
Siberian Math. J. 17, (1976), 758--773.

\bibitem{VGR}
S.~K.~Vodop'yanov, V.~M.~Gol'dshtein, Yu.~G.~Reshetnyak,
On geometric properties of functions with generalized first derivatives,
Uspekhi Mat. Nauk {34} (1979), 17--65. 

\bibitem{VU98}
Vodop'yanov S.~K., Ukhlov A.~D. Sobolev spaces and $(P,Q)$-quasiconformal mappings of Carnot groups,
Siberian Math. J. 39 (1998), 665--682.

\bibitem{VU02} S.~K.~Vodop'yanov, A.~D.~Ukhlov, Superposition
operators in Sobolev spaces, Russian Mathematics (Izvestiya VUZ) 46
(2002), no. 4, 11--33. 

\bibitem{W56} H.~F.~Weinberger, An isoperimetric inequality for the $n$-dimensional free membrane problem, 
Arch. Rat. Mech. Anal., 5 (1956), 633-636.

\end{thebibliography}
\end{document}